\apptocmd{\sloppy}{\hbadness 10000\relax}{}{}
\newtheorem{theorem}{Theorem}
\newtheorem*{theorem*}{Theorem}
\newtheorem{proposition}{Proposition}
\newtheorem*{proposition*}{Proposition}
\newtheorem{definition}{Definition}
\newtheorem*{definition*}{Definition}
\newtheorem{remark}{Remark}
\newtheorem*{remark*}{Remark}
\newtheorem{lemma}{Lemma}
\newtheorem*{lemma*}{Lemma}
\newtheorem*{corollary*}{Corollary}
\DeclareRobustCommand{\coprod}{\mathop{\text{\fakecoprod}}}
\newcommand{\fakecoprod}{
  \sbox0{$\prod$}
  \smash{\raisebox{\dimexpr.9625\depth-\dp0}{\scalebox{1}[-1]{$\prod$}}}
  \vphantom{$\prod$}
}
\DeclareMathOperator{\ob}{Ob}
\DeclareMathOperator{\id}{Id}
\newcommand{\Get}{{\mathrm{get}}}
\newcommand{\Put}{{\mathrm{put}}}
\newcommand{\Mod}{{\mathbf{Mod}}}
\newcommand{\Comod}{{\mathbf{Comod}}}
\newcommand{\CAlg}{{\mathbf{CAlg}}}
\newcommand{\CCoalg}{{\mathbf{CCoalg}}}
\newcommand{\Span}{{\mathbf{Span}}}
\newcommand{\Cospan}{{\mathbf{Cospan}}}
\newcommand{\CMon}{{\mathbf{CMon}}}
\newcommand{\CComon}{{\mathbf{CComon}}}
\newcommand{\Set}{\mathbf{Set}}
\newcommand{\B}{\mathcal{B}}
\newcommand{\D}{\mathcal{D}}
\newcommand{\M}{\mathcal{M}}
\newcommand{\Cat}{\mathcal{C}}
\newcommand{\CAT}{\mathbf{Cat}}
\newcommand{\PROF}{\mathbf{Prof}}
\newcommand{\superscript}[1]{^{\textnormal{#1}}}
\newcommand{\anon}{{\text{\textendash}}}
\newcommand{\anonfirst}{{\text{\textendash}}}
\newcommand{\anonsecond}{{\text{=}}}
\newcommand{\angles}[2]{{\left\langle{#1} \, | \, {#2}\right\rangle}}
\newcommand*\circled[1]{\tikz[baseline={([yshift=-0.65ex]current bounding box.center)}]{
   \node[shape=circle,draw,inner sep=1pt] (char) {#1};}}
\newcommand{\actL}{{\circled{\tiny$\mathsf{L}$}}}
\newcommand{\actR}{{\circled{\tiny$\mathsf{R}$}}}
\newcommand{\icatL}{{\mathcal{L}}}
\newcommand{\icatR}{{\mathcal{R}}}
\newcommand{\starL}{^{*}}
\newcommand{\starR}{^{*'}}
\newcommand{\Lens}{{\mathbf{Lens}}}
\newcommand{\DLens}{{\mathbf{DLens}}}
\newcommand{\DPrism}{{\mathbf{DPrism}}}
\newcommand{\Optic}{{\mathbf{Optic}}}
\newcommand{\Tamb}{{\mathbf{Tamb}}}
\title{Dependent Optics}
\author{Pietro Vertechi
\email{pietro.vertechi@protonmail.com}
}
\begin{document}

\maketitle

\begin{abstract}
    A wide variety of bidirectional data accessors, ranging from {\em mixed optics} to {\em functor lenses}, can be formalized within a unique framework---{\em dependent optics}. Starting from two indexed categories, which encode what maps are allowed in the forward and backward directions, we define the category of dependent optics and establish under what assumptions it has coproducts. Different choices of indexed categories correspond to different families of optics: we discuss dependent lenses and prisms, as well as closed dependent optics. We introduce the notion of {\em Tambara representation} and use it to classify contravariant functors from the category of optics, thus generalizing the {\em profunctor encoding} of optics to the dependent case.
\end{abstract}

\section{Introduction}

Lenses~\cite{bohannon2006relational,johnson2012lenses} are composable, bidirectional data accessors. They can be thought of as a collection of two methods: a $\Get$ method, to access a particular field of a data structure, and a $\Put$ method, to build a new instance of the data structure with an updated field value. Lenses and their more recent generalization, optics, have been implemented and explored in the popular {\em Haskell} library $\mathrm{lens}$~\cite{kmett}. The possible fields of application vary widely, from game theory~\cite{ghani2018compositional} to automatic differentiation~\cite{fischer}.

The current formalization of optics~\cite{clarke2020profunctor,riley2018categories} extends the original theory of lenses from Cartesian categories to arbitrary symmetric monoidal categories, or even {\em actegories}, thus including under a unique formalism a wide variety of data accessors. Unfortunately, this approach fails to include a distinct elegant generalization of lenses, namely {\em functor lenses}~\cite{spivak2019generalized}: every pseudofunctor $\icatR\colon \Cat\superscript{op} \rightarrow \CAT$ induces a fibration of categories $\Lens_\icatR \rightarrow \Cat$ via the Grothendieck construction on the pointwise opposite of $\icatR$. Classical categories of lenses, as well as novel examples, can be obtained with this approach.

The aim of this work is to develop a common generalization of the theories of optics and functor lenses, via the theory of bicategories and pseudofunctors. In \cref{sec:dependent_optics} we lay the fundamental definition of {\em dependent optic} and show that it encompasses both regular optics and functor lenses. We show under what conditions the category of dependent optics has coproducts. In \cref{sec:examples} we give some examples of dependent optics---dependent (monoidal) lenses, dependent (monoidal) prisms, and closed dependent optics---where the key ingredient is the operation of tensoring (co)modules over a (co)monoid in a symmetric monoidal category. Finally, in \cref{sec:tambara}, we establish the notion of {\em Tambara representation}. We show that $\D$-valued Tambara representations are equivalent to contravariant functors from the category of optics to an arbitrary category $\D$, thus generalizing the profunctor encoding of optics to the dependent case.

\section{Dependent optics}
\label{sec:dependent_optics}

Classically, a lens from a domain $(X, X')$ to a codomain $(Y, Y')$ has a $\Get$ method $\Get\colon X \rightarrow Y$ and a $\Put$ method $\Put\colon X \times Y' \rightarrow X'$. This definition suggests two broad classes of generalizations. One approach, {\em functor lenses}~\cite{spivak2019generalized}, replaces the domain and codomain with pairs $(X, P)$ and $(Y, Q)$, where the objects $P$ and $Q$ live in categories parameterized by $X$ and $Y$ respectively. The $\Put$ method is then encoded as a map $\Put\colon \Get^*(Q) \rightarrow P$. Another approach, {\em mixed optics}~\cite{clarke2020profunctor,riley2018categories}, replaces the Cartesian product $\times$ with two actions $\actL, \actR$ of a shared monoidal category $\M$ on categories $\Cat_L, \Cat_R$. This requires to tweak the original definition of lens to an equivalent one expressed via a coend
\begin{equation*}
    \int^{M\in\M} \Cat_L(X, M\actL Y) \times \Cat_R(M\actR Y', X').
\end{equation*}
\begin{remark}
    Some authors (see for instance~\cite{clarke2020profunctor}) work in the setting of enriched categories, so that the above coend is not taken in $\Set$ but rather in some monoidal category $\mathcal{V}$. For simplicity, in this article we will work in the standard non-enriched setting.
\end{remark}
It follows from the Yoneda reduction lemma~\cite[Lm.~1.2.2]{riley2018categories} that this definition recovers classical lenses when a Cartesian category acts on itself. From a practical perspective, optics are equivalence classes of pairs of morphisms
\begin{equation*}
    l \colon X \rightarrow M \actL Y
    \quad \text{ and }\quad
    r \colon M \actR Y' \rightarrow X',
\end{equation*}
where $M \in \ob(\M)$ is called the {\em representative}.

The aim of this section is to establish a general definition of {\em dependent optics} which encompasses both previous generalizations of lenses---functor lenses and optics. The definition is entirely analogous to the definition of optics, but the monoidal actions are replaced by $\B$-indexed categories, where $\B$ is a {\em bicategory}~\cite{benabou1967introduction} (see also~\cite{lack20102} for a more modern treatment). We will consider the bicategory as a {\em category weakly enriched in categories}, hence the notation $\B(A, B)$, for $A, B \in \ob(\B)$, will represent the category of morphisms from $A$ to $B$.

To encode the data of a $\B$-indexed category $\icatL$, i.e. a pseudofunctor $\icatL\colon\B\superscript{op}\rightarrow \CAT$, we will use the following notation. $\icatL^{A}$ for $A \in \ob(\B)$ denotes the category $\icatL(A)$ and $f\starL$ for $f \in\ob(\B(A, B))$ denotes the functor $\icatL(f)$. Throughout this manuscript, we will work with two pseudofunctors, $\icatL$ and $\icatR$. To avoid ambiguities, we will use the notation $f\starR$ to denote $\icatR(f)$.

\begin{definition}\label{def:optics}
    Let $\B$ be a bicategory. Let $\icatL, \icatR$ be $\B$-indexed categories. The category $\Optic_{\icatL, \icatR}$ of {\em dependent optics} has, as objects, triplets $(X, X')^{A}$, with $A \in \ob(\B)$, $X \in \ob(\icatL^{A})$, $X' \in \ob(\icatR^{A})$. Morphisms between $(X, X')^{A}$ and $(Y, Y')^{B}$ are given by the following coend:
    \begin{equation}\label{eq:morphisms}
        \Optic_{\icatL, \icatR}\left((X, X')^{A}, (Y, Y')^{B}\right) = \int^{f \in \B(A, B)} {\icatL^{A}}(X, f\starL Y) \times {\icatR^{A}}(f\starR Y', X').
    \end{equation}
    To refer to specific morphisms explicitly, we denote by $\angles{l}{r}$ the morphism given by $l \in {\icatL^{A}}(X, f\starL Y)$ and $r \in {\icatR^{A}}(f\starR Y', X')$, and we say that it has {\em representative} $f$.
\end{definition}

More explicitly, morphisms in $\Optic_{\icatL, \icatR}\left((X, X')^{A}, (Y, Y')^{B}\right)$ are equivalence classes of pairs $(l ,r)$ with $l \colon X \rightarrow f\starL Y$ and $r \colon f\starR Y' \rightarrow X'$, where $f\colon A \rightarrow B$ is called the representative. The equivalence relation is generated by
\begin{equation*}
    \left(\icatL(m)_{Y}\circ l, \, r\right) \sim \left(l, \, r \circ \icatR(m)_{Y'}\right),
\end{equation*}
with $m \colon f \Rightarrow g$, $l \colon X \rightarrow f\starL Y$ and $r \colon g\starR Y' \rightarrow X'$, where $f, g\colon A \rightrightarrows B$ are parallel 1-morphisms in $\B$.

\begin{remark}
    Here and in what follows we assume that the above coend exists, either because the category $\B(A, B)$ is small (and small colimits exist in $\Set$), or because we can compute it explicitly.
\end{remark}

The generalization of optics via $\B$-indexed categories, rather than monoidal actions, has been proposed in~\cite{milewski2022compound}, where composition of optics is explained in terms of Kan extensions. Here, we will adopt a direct, explicit approach. While the chosen formalisms are different, the two definitions have been shown to be equivalent in~\cite[Ex.~4.2]{capucci2022seeing}.

Let $\theta, \theta'$ encode the coherence natural transformations for $\icatL, \icatR$ respectively. In particular, we have natural isomorphisms
\begin{equation*}
    \theta_A\colon\id_{\icatL^{A}}\Rightarrow \id_A\starL
    \quad\text{ and } \quad
    \theta_{f, g}\colon f\starL\circ g\starL\Rightarrow (g \circ f)\starL.
\end{equation*}
$\theta'_A$ and $\theta'_{f, g}$ are defined in an analogous way.
The identity optic is defined as follows:
\begin{equation}\label{eq:identity_morphism}
    \id_{(X, X')^{A}} := \angles{(\theta_A)_X}{(\theta_A^{\prime-1})_{X'}}.
\end{equation}
The map
\begin{equation*}
    \begin{tikzcd}
    {\icatL^{B}}(Y, g\starL Z) \times {\icatR^{B}}(g\starR Z', Y') \times {\icatL^{A}}(X, f\starL Y) \times {\icatR^{A}}(f\starR Y', X')\arrow{d}\\
    \Optic_{\icatL, \icatR}\left((X, X')^{A}, (Z, Z')^{C}\right)
    \end{tikzcd}
\end{equation*}
given by
\begin{equation}\label{eq:morphism_composition}
    \angles{l_2}{r_2}\circ\angles{l_1}{r_1} = \angles{(\theta_{f, g})_{Z}\circ f\starL(l_2)\circ l_1}{r_1\circ f\starR(r_2)\circ (\theta^{\prime-1}_{f,g})_{Z'}}
\end{equation}
is extranatural in $f, g$ and thus induces a composition function
\begin{equation*}
    \begin{tikzcd}
        \Optic_{\icatL, \icatR}\left((Y, Y')^{B}, (Z, Z')^{C}\right) \times \Optic_{\icatL, \icatR}\left((X, X')^{A}, (Y, Y')^{B}\right) \arrow{d} \\
        \Optic_{\icatL, \icatR}\left((X, X')^{A}, (Z, Z')^{C}\right).
    \end{tikzcd}
\end{equation*}

\begin{theorem}\label{thm:optic_category}
$\Optic_{\icatL, \icatR}$ is a category.
\end{theorem}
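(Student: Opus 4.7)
The plan is to verify, in order, (i) that the composition formula \eqref{eq:morphism_composition} is well-defined on equivalence classes, (ii) the two unit laws, and (iii) associativity. Step (i) has already been asserted in the text via the extranaturality of the composition formula in $f$ and $g$, but it should still be made precise: given a 2-cell $m\colon f\Rightarrow f'$ one must check that substituting $\icatL(m)$ in the left factor of the first pair and $\icatR(m)$ in the right factor of the first pair yields equal composites, and symmetrically for a 2-cell $n\colon g\Rightarrow g'$ in the second pair. Both identities unpack to the naturality of $\theta_{\anon,\anon}$ and $\theta'_{\anon,\anon}$ in each argument, together with the naturality of $\icatL(m)$ and $\icatR(m)$, so no coherence data beyond the axioms of a pseudofunctor is needed.

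For the unit laws, consider $\angles{l}{r}$ with representative $f\colon A\to B$ and compute $\angles{l}{r}\circ\id_{(X,X')^{A}}$ and $\id_{(Y,Y')^{B}}\circ\angles{l}{r}$. In each case the resulting representative is $f\circ\id_A$ or $\id_B\circ f$, and the pseudofunctor unit coherence axioms---which state that $\theta_{f,\id_B}$ and $\theta_{\id_A,f}$ are respectively the canonical isomorphisms induced by $\theta_B$ and $\theta_A$, and analogously for $\theta'$---identify the resulting optic with $\angles{l}{r}$ itself, once one rewrites along the unitor $\id_A\cong \id_A\circ\id_A$ coming from the bicategory $\B$. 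The key is to observe that the $\theta_A$-component appearing in the identity and the $\theta_{f,\id_A}$-component appearing from composition cancel, again by the pseudofunctor unit axiom.

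For associativity, given three composable optics $\angles{l_1}{r_1}, \angles{l_2}{r_2}, \angles{l_3}{r_3}$ with representatives $f,g,h$, both $(\angles{l_3}{r_3}\circ\angles{l_2}{r_2})\circ\angles{l_1}{r_1}$ and $\angles{l_3}{r_3}\circ(\angles{l_2}{r_2}\circ\angles{l_1}{r_1})$ produce optics with representative $h\circ g\circ f$. The left component of the first is (schematically) $\theta_{f,h\circ g}\circ f\starL(\theta_{g,h}\circ g\starL(l_3)\circ l_2)\circ l_1$ while the second gives $\theta_{g\circ f,h}\circ (g\circ f)\starL(l_3)\circ \theta_{f,g}\circ f\starL(l_2)\circ l_1$. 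Equality of these two expressions is precisely the pseudofunctor coherence hexagon for $\theta$, combined with the naturality of $\theta_{f,g}$ in the object argument (applied to $l_3$). The right-component equality is the dual coherence for $\theta'$.

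The main obstacle is purely bookkeeping: every coherence juggling above relies only on the standard pseudofunctor axioms, but in full detail the computation produces long pasting diagrams. The cleanest presentation is to verify associativity on representatives $(l,r)$ before passing to equivalence classes, and then to note that the coend-quotient is compatible with composition by the extranaturality established in step (i). No additional assumption on $\B$, $\icatL$, or $\icatR$ is required.
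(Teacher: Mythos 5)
Your overall skeleton (well-definedness of composition via extranaturality, then unit laws, then associativity, all driven by the pseudofunctor coherence data) is the same as the paper's, but there is a genuine gap at the crux of both the unit and the associativity arguments: you treat the representatives of the two sides as equal, and in a general bicategory they are not. For associativity, one bracketing produces an optic with representative $(h\circ g)\circ f$ and the other $h\circ(g\circ f)$; consequently the two forward components you display are morphisms into \emph{different} objects, $((h\circ g)\circ f)\starL W$ versus $(h\circ (g\circ f))\starL W$, and cannot be literally equal, so "equality is precisely the pseudofunctor coherence hexagon" does not parse. The coherence law only yields an identity up to the associator, of the shape $\icatL(\alpha_{f,g,h})_W\circ(\text{one composite of }\theta\text{'s}) = (\text{the other composite of }\theta\text{'s})$, and the two composite optics are then identified only by invoking the coend equivalence relation to transfer $\icatL(\alpha_{f,g,h})$ acting on the forward part into $\icatR(\alpha_{f,g,h}^{-1})$ acting on the backward part; this transfer step is exactly what the paper's proof does and is absent from your argument. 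The same issue occurs in the unit laws: $\angles{l}{r}\circ\id_{(X,X')^A}$ has representative $f\circ\id_A$ and $\id_{(Y,Y')^B}\circ\angles{l}{r}$ has representative $\id_B\circ f$, so the identification with $\angles{l}{r}$ (representative $f$) requires the unitor $2$-cells $\rho_f$ and $\lambda_f$ of $\B$, applied through $\icatL$ and $\icatR$ and moved across the coend relation; the unitor "$\id_A\cong\id_A\circ\id_A$" you cite is not the relevant one and does not even have the right type for this comparison.

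This also undermines your proposed clean-up strategy of "verifying associativity on representatives $(l,r)$ before passing to equivalence classes": the unit and associativity laws are simply false at the level of representative pairs, because the representatives themselves only agree up to the unitors and associator of $\B$. They become true only after the coend quotient, using the coherence laws of $\icatL$ and $\icatR$ (together with strictness of $\CAT$) to convert those $2$-cells into morphisms that the equivalence relation can shuttle between the two slots. Your plan would work verbatim only under the extra hypothesis that $\B$ is a strict $2$-category and $\icatL,\icatR$ are strict functors, which the theorem does not assume. Your step (i) on well-definedness of composition is fine and matches the paper's extranaturality remark.
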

\begin{proof}
Checking the category axioms (unity and associativity) is tedious but straightforward. Here, we denote $\lambda_f, \rho_f$ the left and right unitors in $\B$. We use the fact that $\theta_A$ (resp. $\theta^{\prime-1}_A$) is a natural transformation $\id_{\icatL^A} \Rightarrow \id_A\starL$ (resp. $\id_A\starR \Rightarrow \id_{\icatR^A}$), as well as the identity coherence law for a pseudofunctor, keeping in mind that $\CAT$ is a strict 2-category and hence has a trivial unitor. Given an optic $\angles{l}{r}\colon (X, X')^A \rightarrow (Y, Y')^B$ with representative $f$:
\begin{align*}
    \angles{l}{r}\circ\id_{(X, X')^{A}}
    &=\angles{(\theta_{\id_A, f})_{Y}\circ \id_A\starL(l)\circ (\theta_A)_{X}}{(\theta^{\prime-1}_{A})_{X'}\circ \id_A\starR(r)\circ (\theta^{\prime-1}_{\id_A, f})_{Y'}}\\
    &=\angles{(\theta_{\id_A, f})_{Y}\circ (\theta_A)_{f\starL Y} \circ l}{r \circ (\theta^{\prime-1}_A)_{f\starR Y'} \circ (\theta^{\prime-1}_{\id_A, f})_{Y'}}\\
    &=\angles{(\theta_{\id_A, f})_{Y}\circ (\theta_A)_{f\starL Y} \circ l}{r \circ \icatR(\rho_f)_{Y'}}\\
    &=\angles{\icatL(\rho_f)_{Y}\circ (\theta_{\id_A, f})_{Y}\circ (\theta_A)_{f\starL Y} \circ l}{r}\\
    &=\angles{l}{r},
\end{align*}
where $\icatR(\rho_f)_{Y'}$ can be moved to the left as $\icatL(\rho_f)_{Y}$ thanks to the equivalence relation introduced by the coend. Analogously,
\begin{align*}
    \id_{(Y, Y')^{B}}\circ\angles{l}{r}
    &=\angles{(\theta_{f, \id_B})_{Y}\circ f\starL((\theta_{B})_{Y})\circ l}{r\circ f\starR((\theta^{\prime-1}_{B})_{Y'})\circ (\theta^{\prime-1}_{f, \id_B})_{Y'}}\\
    &=\angles{(\theta_{f, \id_B})_{Y}\circ f\starL((\theta_{B})_{Y})\circ l}{r \circ \icatR(\lambda_f)_{Y'}}\\
    &=\angles{\icatL(\lambda_f)_{Y}\circ (\theta_{f, \id_B})_{Y}\circ f\starL((\theta_{B})_{Y})\circ l}{r}\\
    &=\angles{l}{r}.
\end{align*}
To prove associativity, let us consider a sequence of morphisms
\begin{equation*}
    (X, X')^A \xrightarrow{\angles{l_1}{r_1}} (Y, Y')^B \xrightarrow{\angles{l_2}{r_2}} (Z, Z')^C \xrightarrow{\angles{l_3}{r_3}} (W, W')^D,
\end{equation*}
with choices of representatives $f$, $g$, $h$ respectively. Then,
\begin{align*}
    (\angles{l_3}{r_3}\circ\angles{l_2}{r_2})\circ\angles{l_1}{r_1}
    &=\angles{(\theta_{g, h})_{W}\circ g\starL(l_3)\circ l_2}{r_2\circ g\starR(r_3)\circ (\theta^{\prime-1}_{g, h})_{W'}}\circ\angles{l_1}{r_1}\\
    &=\langle (\theta_{f, g; h})_{W}\circ f\starL((\theta_{g, h})_{W})\circ f\starL(g\starL(l_3))\circ f\starL l_2 \circ l_1 \, | \\
    &\quad r_1 \circ f\starR(r_2)\circ f\starR(g\starR(r_3))\circ f\starR((\theta^{\prime-1}_{g, h})_{W'})\circ (\theta^{\prime-1}_{f, g;h})_{W'}\rangle.
\end{align*}
Whereas, when associating in a different order, one has
\begin{align*}
    \angles{l_3}{r_3}\circ(\angles{l_2}{r_2}\circ\angles{l_1}{r_1})
    &=\angles{l_3}{r_3}\circ\angles{(\theta_{f,g})_{Z}\circ f\starL(l_2)\circ l_1}{r_1\circ f\starR(r_2)\circ (\theta^{\prime-1}_{f,g})_{Z'}}\\
    &=\langle (\theta_{f; g, h})_{W}\circ (f; g)\starL(l_3) \circ (\theta_{f,g})_{Z}\circ f\starL(l_2)\circ l_1 \, |\\
    &\quad r_1\circ f\starR(r_2)\circ (\theta^{\prime-1}_{f,g})_{Z'} \circ (f; g)\starR(r_3) \circ (\theta^{\prime-1}_{f; g, h})_{W'}\rangle\\
    &=\langle (\theta_{f; g, h})_{W}\circ (\theta_{f,g})_{h\starL W} \circ f\starL(g\starL(l_3)) \circ f\starL(l_2)\circ l_1 \, |\\
    &\quad r_1\circ f\starR(r_2)\circ f\starR(g\starR(r_3)) \circ (\theta^{\prime-1}_{f,g})_{h\starR W'} \circ (\theta^{\prime-1}_{f; g, h})_{W'}\rangle.
\end{align*}
The two optics are equal, thanks to the relationships
\begin{align*}
    \icatL(\alpha_{f, g, h})_{W}\circ (\theta_{f; g, h})_{W}\circ (\theta_{f,g})_{h\starL W} &= (\theta_{f, g; h})_{W}\circ f\starL((\theta_{g, h})_{W}),\\
    (\theta^{\prime-1}_{f,g})_{h\starR W'} \circ (\theta^{\prime-1}_{f; g, h})_{W'} \circ \icatR(\alpha_{f, g, h}^{-1})_{W'}  &= f\starR((\theta^{\prime-1}_{g, h})_{W'}) \circ (\theta^{\prime-1}_{f, g; h})_{W'},
\end{align*}
where $\alpha_{f, g, h}$ is the associator of $\B$.
\end{proof}

\begin{remark}\label{rm:bicategory}
Unlike the dependent lenses case, here we generally do not have a pseudofunctor $\Optic_{\icatL, \icatR} \rightarrow \B$. Such a pseudofunctor would be ill-defined on morphisms in $\Optic_{\icatL, \icatR}$ due to the equivalence relation imposed by the coend. To obviate this issue, a possible approach worthy of future exploration would be to define a {\em bicategory} of dependent optics where, instead of identifying equivalent 1-morphisms, we add 2-morphisms between them. See~\cite{braithwaite2021fibre} for a purely bicategorical approach to dependent optics. 
\end{remark}

\subsection{Comparison with existent constructions}

Dependent optics simultaneously generalize both mixed optics~\cite{riley2018categories} and functor lenses~\cite{spivak2019generalized}. Intuitively, mixed optics are dependent optics where the bicategory $\B$ has a unique object, whereas functor lenses are dependent optics where $\B$ is a 1-category and the $\B$-indexed category $\icatL$ is trivial.

\begin{proposition}\label{prop:dependent_optics_encompass_optics}
    Mixed optics~\cite[Def.~6.1.1]{riley2018categories} are a particular case of dependent optics, where the source bicategory has a unique object.
\end{proposition}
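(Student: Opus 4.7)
The plan is to exploit the standard correspondence between one-object bicategories and monoidal categories (the delooping principle). If $\B$ has a unique object $A$, then the hom-category $\M := \B(A,A)$ inherits a monoidal structure from horizontal composition in $\B$, with unit $\id_A$ and associator/unitors coming from those of $\B$. Conversely, any monoidal category arises this way. Under this correspondence, a pseudofunctor $\icatL\colon\B\superscript{op}\to\CAT$ amounts to a single category $\Cat_L := \icatL^A$ together with a (right) action $\actL\colon \Cat_L\times\M\to\Cat_L$, defined on objects by $Y\actL M := M\starL Y$, with coherence natural isomorphisms given by $\theta_A$ and $\theta_{f,g}$. Similarly for $\icatR$. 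So the data specifying dependent optics over a one-object bicategory is precisely the data specifying mixed optics in the sense of~\cite[Def.~6.1.1]{riley2018categories}.

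Having set up the translation, I would then compare the two definitions term by term. On objects, a dependent optic $(X, X')^{A}$ collapses to a pair $(X, X')$ with $X \in \ob(\Cat_L)$, $X' \in \ob(\Cat_R)$, which is exactly the object data for a mixed optic. On morphisms, the coend in \cref{eq:morphisms} becomes
\begin{equation*}
    \int^{f\in\B(A,A)} \icatL^A(X, f\starL Y)\times\icatR^A(f\starR Y', X')
    \;=\;\int^{M\in\M}\Cat_L(X, M\actL Y)\times\Cat_R(M\actR Y', X'),
\end{equation*}
which matches the definition of mixed optics verbatim under the identification above.

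Next I would check that identities and composition agree. The identity optic~(\ref{eq:identity_morphism}) reduces, under the translation, to the pair consisting of the unitors of the action (the coherence isomorphism witnessing $I\actL Y\cong Y$), which is the mixed-optic identity. Composition in~(\ref{eq:morphism_composition}) reduces, term by term, to the standard composition of mixed optics: the factor $(\theta_{f,g})_Z$ becomes the multiplicator of the action $(M\otimes N)\actL Z\cong M\actL(N\actL Z)$, and the functorial image $f\starL(l_2)$ becomes $M\actL l_2$. This exactly reproduces the composition formula of~\cite[Def.~6.1.1]{riley2018categories}.

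The main work—and the only mildly delicate step—is verifying that the coherence data transports correctly: I need to check that the monoidal-category coherences obtained from the bicategorical $\lambda_f$, $\rho_f$, $\alpha_{f,g,h}$ and from $\theta$, $\theta'$ are precisely the action coherences required by mixed optics. This is routine bookkeeping, identical in spirit to the unit/associativity calculations already performed in the proof of \cref{thm:optic_category}, so I would simply assemble a dictionary and invoke it rather than redo the diagram chase. Once the translation is explicit, the equality of categories (or at least an isomorphism of categories) follows immediately from the coend formulas coinciding.
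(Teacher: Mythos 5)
Your proposal is correct and follows essentially the same route as the paper: both rest on the delooping correspondence between one-object bicategories and monoidal categories, under which pseudofunctors $\B\superscript{op}\to\CAT$ become actions of $\M$ on $\Cat_L$, $\Cat_R$ and the coend defining dependent optics becomes verbatim the mixed-optics coend. The paper's proof is terser (it does not spell out the identity/composition bookkeeping you outline, and it records the variance by setting $\B = (\mathbb{B}\M)\superscript{op}$), but the substance is the same.
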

\begin{proof}
    Let us consider two categories $\Cat_L$ and $\Cat_R$ acted on by a monoidal category $\M$. We can consider the bicategory $\mathbb{B}\M$ obtained by delooping. Explicitly, $\mathbb{B}\M$ has a unique object $*$ with endomorphism category $\mathbb{B}\M(*, *) = \M$, where composition is given by the monoidal structure of $\M$. Then the action of $\M$ on another category $\Cat$ induces a pseudofunctor $\M\rightarrow\CAT$. Under this correspondence, optics for the actions $\psi_L\colon\M \rightarrow [\Cat_L, \Cat_L]$ and $\psi_R\colon\M \rightarrow [\Cat_R, \Cat_R]$ are the same as optics for the corresponding pseudofunctors $\icatL, \icatR\colon \mathbb{B}\M\rightrightarrows \CAT$, hence they are a special case of dependent optics with $\B = (\mathbb{B}\M)\superscript{op}$.
\end{proof}

\begin{proposition}\label{prop:dependent_optics_encompass_functor_lenses}
    Functor lenses, as defined in~\cite{spivak2019generalized}, are a particular case of dependent optics, where the source bicategory $\B$ is a category and the $\B$-indexed category $\icatL$ is trivial.
\end{proposition}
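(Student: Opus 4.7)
The plan is to identify explicitly the data specifying a functor lens and match it with the corresponding instance of \cref{def:optics}. Recall that, given a pseudofunctor $\icatR\colon \Cat\superscript{op} \rightarrow \CAT$, a morphism $(X, P) \rightarrow (Y, Q)$ in Spivak's $\Lens_\icatR$ consists of a $\Get$ component $f\colon X \rightarrow Y$ in $\Cat$ together with a $\Put$ component $g\colon f\starR Q \rightarrow P$ in $\icatR^{X}$. I would take $\B$ to be $\Cat$ regarded as a locally discrete bicategory (so that the only $2$-morphisms are identities), and $\icatL$ to be the trivial pseudofunctor sending every object of $\B$ to the terminal category and every $1$-morphism to its identity endofunctor.

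With this choice, an object of $\Optic_{\icatL, \icatR}$ is a triple $(X, X')^{A}$ in which $X$ is forced to be the unique object of the terminal category, so such objects are in canonical bijection with pairs $(A, X')$ with $X' \in \ob(\icatR^{A})$---precisely the objects of $\Lens_\icatR$. For morphisms, the factor $\icatL^{A}(X, f\starL Y)$ appearing in \eqref{eq:morphisms} is always a singleton and the indexing category $\B(A, B)$ is discrete, so the coend collapses into a disjoint union
\begin{equation*}
    \Optic_{\icatL, \icatR}\left((X, X')^{A}, (Y, Y')^{B}\right) \;\cong\; \coprod_{f \in \B(A, B)} \icatR^{A}(f\starR Y', X').
\end{equation*}
An element of this set is a pair $(f, r)$ with $f\colon A \rightarrow B$ in $\B$ and $r\colon f\starR Y' \rightarrow X'$ in $\icatR^{A}$, which is exactly the datum of a functor-lens morphism. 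The equivalence relation introduced by the coend is vacuous, because the only $2$-cells in $\B$ are identities.

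To complete the comparison, I would verify that the identity optic $\angles{(\theta_A)_X}{(\theta_A^{\prime-1})_{X'}}$ corresponds under this bijection to the functor-lens identity on $(A, X')$, and that the composition formula \eqref{eq:morphism_composition} restricts to the usual functor-lens composition of pairs, i.e.~$(g \circ f,\; r \circ f\starR(s) \circ (\theta^{\prime-1}_{f, g})_{Z'})$. This identity-and-composition check is the step that requires the most attention: one must match the coherence isomorphisms $\theta'$ used in \cref{def:optics} with the pseudofunctoriality constraints implicit in Spivak's Grothendieck-style definition of $\Lens_\icatR$. Since in both situations these constraints come directly from the same pseudofunctor $\icatR$, the verification is a routine unfolding of definitions rather than a substantive obstacle.
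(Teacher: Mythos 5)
Your proposal is correct and follows essentially the same route as the paper: take $\B$ to be the 1-category (locally discrete bicategory), take $\icatL$ to be the terminal indexed category so that objects reduce to pairs $(A, X')$, and observe that the absence of non-trivial 2-cells collapses the coend in \cref{eq:morphisms} to the coproduct $\coprod_{f \in \B(A, B)} \icatR^{A}(f\starR Y', X')$, recovering the hom-sets of $\Lens_\icatR$. Your additional remark about checking identities and composition against the Grothendieck-style definition is a reasonable extra precaution, though the paper leaves that verification implicit.
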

\begin{proof}
    Let $\B$ be a 1-category. In~\cite{spivak2019generalized}, functor lenses are defined as the Grothendieck construction of the pointwise opposite of a $\B$-indexed category $\icatR$. Let $\bullet$ be the terminal $\B$-indexed category. Then,
    \begin{equation*}
        \Lens_\icatR \simeq \Optic_{\bullet, \icatR}.
    \end{equation*}
    Indeed, objects in $\Optic_{\bullet, \icatR}$ are simply pairs $(A, X')$, with $X' \in \icatR^{A}$, as there always is a unique object in $\bullet^{A}$. As $\B$ has no non-trivial 2-morphisms, we have
    \begin{equation*}
        \int^{f \in \B(A, B)} {\icatR^{A}}(f\starR Y', X') \simeq \coprod_{f \in \B(A, B)} {\icatR^{A}}(f\starR Y', X').
    \end{equation*}
\end{proof}

\subsection{Coproducts}

One fundamental motivation for dependent lenses and, more generally, dependent optics is the lack of coproducts in categories of ordinary lenses or optics. This situation is much improved in the dependent case: for instance, adding coproducts to the category of lenses leads naturally to dependent lenses~\cite{braithwaite2021fibre}. In the following proposition, we show that there are general conditions to ensure that the category $\Optic_{\icatL, \icatR}$ has coproducts.

\begin{proposition}\label{prop:coproducts_optics}
    Let $\B$ be a bicategory with finite coproducts. Let us assume that $\icatL, \icatR$ turn finite coproducts in $\B$ into finite products in $\CAT$. Then, $\Optic_{\icatL, \icatR}$ has finite coproducts.
\end{proposition}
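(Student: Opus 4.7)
The plan is to construct the coproduct of $(X_1, X_1')^{A_1}$ and $(X_2, X_2')^{A_2}$ explicitly and to verify the universal property by decomposing the hom-sets of $\Optic_{\icatL, \icatR}$. Let $A := A_1 \sqcup A_2$ in $\B$ with inclusions $\iota_i \colon A_i \to A$. The hypothesis on $\icatL$ means that the reindexing functors assemble into an equivalence $\icatL^{A} \simeq \icatL^{A_1} \times \icatL^{A_2}$ given by $X \mapsto (\iota_1\starL X, \iota_2\starL X)$, and analogously for $\icatR$. Let $X_1 \sqcup X_2 \in \ob(\icatL^A)$ and $X_1' \sqcup X_2' \in \ob(\icatR^A)$ be the objects corresponding to $(X_1, X_2)$ and $(X_1', X_2')$ under these equivalences, so that $\iota_i\starL(X_1 \sqcup X_2) \simeq X_i$ and $\iota_i\starR(X_1' \sqcup X_2') \simeq X_i'$. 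The candidate coproduct is then $(X_1 \sqcup X_2, X_1' \sqcup X_2')^A$, with inclusion optics $\angles{\eta_i}{\epsilon_i}$ of representative $\iota_i$ whose two components are the canonical isomorphisms supplied by the equivalences above.

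For the universal property I would show, for every $(Y, Y')^B$, that precomposition with the inclusions induces a bijection
\begin{equation*}
    \Optic_{\icatL, \icatR}\bigl((X_1 \sqcup X_2, X_1' \sqcup X_2')^A, (Y, Y')^B\bigr) \xrightarrow{\sim} \prod_{i=1,2} \Optic_{\icatL, \icatR}\bigl((X_i, X_i')^{A_i}, (Y, Y')^B\bigr).
\end{equation*}
The argument combines three ingredients. First, the bicategorical universal property of $A = A_1 \sqcup A_2$ yields an equivalence $\B(A, B) \simeq \B(A_1, B) \times \B(A_2, B)$, $f \mapsto (f\circ\iota_1, f\circ\iota_2)$. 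Second, writing $f_i := f \circ \iota_i$, the product-preservation hypothesis supplies natural isomorphisms $\icatL^A(X_1 \sqcup X_2, f\starL Y) \simeq \prod_i \icatL^{A_i}(X_i, f_i\starL Y)$, and dually for $\icatR$. Third, a Fubini-type argument rewrites the coend over $f \in \B(A, B)$ of this product integrand as the product, over $i$, of the coends over $f_i \in \B(A_i, B)$, which are precisely the right-hand hom-sets.

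The main obstacle is verifying that the isomorphism assembled from these three ingredients actually coincides with the map induced by precomposition with the inclusion optics. This requires tracing the composition rule \eqref{eq:morphism_composition}: the coherence data $\theta_{\iota_i, f}$ and $\theta'_{\iota_i, f}$ must be shown to intertwine the decomposition $\icatL^A(X_1 \sqcup X_2, f\starL Y) \simeq \prod_i \icatL^{A_i}(X_i, f_i\starL Y)$ with the action of $\iota_i\starL$ followed by the canonical isomorphisms defining $X_1 \sqcup X_2$, in the same spirit as the unit-law computations in the proof of \cref{thm:optic_category}. Finally, for the nullary case the hypothesis forces $\icatL^0$ and $\icatR^0$ to be terminal categories and $\B(0, B)$ to be contractible, so the unique object of type $0$ is initial in $\Optic_{\icatL, \icatR}$, giving the empty coproduct.
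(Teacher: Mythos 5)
Your proposal is correct and follows essentially the same route as the paper: form $A = \coprod_{i} A_i$, use the hypothesis that $\icatL, \icatR$ send this coproduct to a product of categories to obtain $(X, X')^A$ with $\iota_i\starL X \simeq X_i$ and $\iota_i\starR X' \simeq X_i'$, then decompose the hom coend via $\B(A, B) \simeq \prod_i \B(A_i, B)$ and Fubini for coends. The explicit-injection verification you single out as the main obstacle can be bypassed: the chain of isomorphisms is natural in $(Y, Y')^B$, so $(X, X')^A$ represents $\prod_i \Optic_{\icatL,\icatR}\bigl((X_i, X_i')^{A_i}, -\bigr)$ and is therefore automatically the coproduct, with the injection optics recovered from the image of the identity, which is in effect how the paper concludes.
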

\begin{proof}
    Let $\left((X_i, X_i')^{A_i}\right)_{i \in I}$ be a finite family of objects in $\Optic_{\icatL, \icatR}$. Let $A = \coprod_{i \in I} A_i$. Let $\iota_i \colon A_i \hookrightarrow A$ be the inclusions. Let $X \in \ob(\icatL^A)$ and $X' \in \ob(\icatR^A)$ be such that, for all $i \in I$,
    \begin{equation*}
        \iota_i\starL(X) \simeq X_i
        \quad \text{ and } \quad
        \iota_i\starR(X') \simeq X_i'.
    \end{equation*}
    For all $(Y, Y')^{B} \in\ob\left(\Optic_{\icatL, \icatR}\right)$ the following holds:
    \begin{align*}
        \Optic_{\icatL, \icatR}\left((X, X')^{A}, (Y, Y')^{B}\right)
        &=\int^{f\in\B(A, B)} \icatL^{A}(X, f\starL Y) \times \icatR^{A}(f\starR Y', X')\\
        &\simeq\int^{(f_i)_{i\in I}\in\prod_{i \in I}\B(A_i, B)} \prod_{i\in I}\icatL^{A_i}(X_i, f_i\starL Y) \times \icatR^{A_i}(f_i\starR Y', X_i')\\
        &\simeq\prod_{i \in I}\Optic_{\icatL, \icatR}\left((X_i, X_i')^{A_i}, (Y, Y')^{B}\right),
    \end{align*}
    where the last isomorphism is Fubini's theorem for coends, hence $(X, X')^{A}$ is the coproduct of $\left((X_i, X_i')^{A_i}\right)_{i \in I}$.
\end{proof}

\section{Examples}
\label{sec:examples}

Different choices of bicategories and functors give rise to different types of optics, see~\cite{clarke2020profunctor} for an overview of the monoidal case, i.e., $\B = (\mathbb{B}\M)\superscript{op}$, as in \cref{prop:dependent_optics_encompass_optics}. Here, we discuss {\em dependent lenses}~\cite{spivak2019generalized}, {\em dependent prisms}, and generalizations thereof. We then show how the existence of a right adjoint to a given functor can be used to construct further classes of examples of dependent optics. More examples of dependent optics, such as polynomial optics, are described in~\cite{milewski2022compound}.

\subsection{Dependent lenses}
\label{sec:dependent_lenses}

\begin{definition}\label{def:lenses}
    Let $\Cat$ be a finitely complete category. Let $\Span_{\Cat}$ be its bicategory of spans. Let $\Cat / \anon$ be the $\Span_{\Cat}$-indexed category of slices. More explicitly,
    \begin{equation*}
        \Cat / \anon \colon \Span_{\Cat}\superscript{op} \rightarrow \CAT
    \end{equation*}
    is a pseudofunctor that associates to each object $A\in \ob(\Cat)$ the slice category $\Cat / A$. Functoriality is given by pulling back and then pushing forward along the legs of the span.
    We define the category of {\em dependent lenses} as follows:
    \begin{equation*}
        \DLens_{\Cat} := \Optic_{\Cat / \anon, \Cat / \anon}.
    \end{equation*}
\end{definition}

Objects in $\DLens_{\Cat}$ are cospans $X \rightarrow A \leftarrow X'$. Morphisms between two cospans $X \rightarrow A \leftarrow X'$ and $Y \rightarrow B \leftarrow Y'$ are given by
\begin{equation}\label{eq:morphisms_lenses}
    \int^{M \in \Cat / (A\times B)} {\Cat / A}(X, M \times_B Y) \times {\Cat / A}(M \times_B Y', X').
\end{equation}
\Cref{eq:morphisms_lenses} can be visualized as follows. A class of homomorphisms in $\DLens_{\Cat}$ with representative $A \leftarrow M \rightarrow B$ is given by a pair of dotted arrows that make the following diagram commute.
\begin{equation*}
    \begin{tikzcd}
        X \arrow[dotted]{d} \arrow{r} & A & X' \arrow{l}\\
        M \times_B Y \arrow{r} & M \arrow{u} & M \times_B Y' \arrow{l} \arrow[dotted]{u}
    \end{tikzcd}
\end{equation*}
Morphisms in $\DLens_{\Cat}$ can be computed explicitly:
\begin{equation}\label{eq:dependent_lens_derivation}
    \begin{aligned}
        \DLens_{\Cat}\left((X, X')^{A}, (Y, Y')^{B}\right)
        &= \int^{M \in \Cat / (A\times B)} {\Cat / A}(X, M \times_B Y) \times {\Cat / A}(M \times_B Y', X') \\
        &\simeq \int^{M \in \Cat / (A\times B)}\coprod_{X \rightarrow Y}
        {\Cat / (A\times B)}(X, M) \times {\Cat / A}(M \times_B Y', X') \\
        &\simeq \coprod_{X \rightarrow Y}
        \int^{M \in \Cat / (A\times B)}
        {\Cat / (A\times B)}(X, M) \times {\Cat / A}(M \times_B Y', X') \\
        &\simeq \coprod_{X \rightarrow Y} {\Cat / A}(X \times_B Y', X'),
    \end{aligned}
\end{equation}
where the last isomorphism follows from the Yoneda reduction lemma~\cite[Lm~1.2.2]{riley2018categories}.

Even though this category is equivalent to the definition of dependent lenses via the Grothendieck construction in~\cite{spivak2019generalized}, we believe it can have independent practical value. Encoding dependent lenses as maps
\begin{equation*}
    X\rightarrow M \times_B Y
    \quad \text{ and } \quad
    M \times_B Y'\rightarrow X' 
\end{equation*}
can lead to a more efficient implementation of, for instance, reverse-mode automatic differentiation (as done in the {\em Julia} library $\mathrm{Diffractor}$~\cite{fischer}), where the representative $M$ is optimized to contain precisely the information about the input that is required to compute the backward map. Features of the input that are not needed can be discarded, and quantities computed in the forward map can be stored in $M$ if they are useful for the backward map.

The documentation of the $\mathrm{Diffractor}$ library~\cite{fischer} hints at the need for dependent optics. Indeed, a key motivation for this work was to build a rigorous dependently-typed framework for bidirectional data transformations that would allow for reverse-mode automatic differentiation with an explicit notion of representative.
However, to formalize the difference between our construction of dependent lenses and the one based on functor lenses, we would need to define the {\em bicategory} of dependent optics, where all the information about the representative is preserved (cp. \cref{rm:bicategory}).

Unlike lenses, categories of dependent lenses admit finite coproducts, provided that the base category is {\em lextensive}~\cite[Sect.~4.4]{carboni1993introduction}.

\begin{lemma}\label{lm:coproducts_spans}
    If $\Cat$ is a lextensive category, then the inclusion $\Cat \hookrightarrow \Span_{\Cat}$ preserves coproducts.
\end{lemma}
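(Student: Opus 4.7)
The plan is to identify the hom-categories of $\Span_\Cat$ with slices of $\Cat$ and then read off the coproduct universal property from the distributivity and extensivity axioms. Recall that for any $X, Y \in \ob(\Cat)$, the hom-category $\Span_\Cat(X, Y)$ is equivalent to $\Cat/(X \times Y)$: a span $X \leftarrow M \rightarrow Y$ is the same data as an object $M \rightarrow X \times Y$, and span morphisms correspond to arrows over $X \times Y$. The inclusion $\Cat \hookrightarrow \Span_\Cat$ sends a morphism $f\colon A \rightarrow B$ to the span $A \xleftarrow{\id} A \xrightarrow{f} B$, so precomposition with this span acts on a span $A \leftarrow M \rightarrow C$ by pulling back its left leg along $f$.

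For the binary case, given $A, B$ in $\Cat$ with coproduct $A + B$ and inclusions $\iota_A, \iota_B$, I would show that for every $C$ the functor
\begin{equation*}
    \Span_\Cat(A+B, C) \longrightarrow \Span_\Cat(A, C) \times \Span_\Cat(B, C)
\end{equation*}
given by precomposition with the image spans of $\iota_A$ and $\iota_B$ is an equivalence. Under the slice identification this becomes the pullback functor $\Cat/((A+B) \times C) \rightarrow \Cat/(A \times C) \times \Cat/(B \times C)$. In a lextensive category, finite products distribute over finite coproducts, giving $(A+B) \times C \cong A \times C + B \times C$; combined with the extensivity axiom of \cite{carboni1993introduction} this yields $\Cat/(A \times C + B \times C) \simeq \Cat/(A \times C) \times \Cat/(B \times C)$, and a direct check shows that the two projections from this equivalence are precisely pullback along $\iota_A \times \id_C$ and $\iota_B \times \id_C$.

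For the nullary case, the initial object $0 \in \Cat$ is sent to $0 \in \Span_\Cat$, and I would show $\Span_\Cat(0, C) \simeq 1$ for every $C$. Under the slice identification this reads $\Cat/(0 \times C) \simeq \Cat/0 \simeq 1$, using that in any lextensive (hence distributive) category $0$ is strict initial: $0 \times C \cong 0$, and any arrow into $0$ is an isomorphism, so the slice $\Cat/0$ has a single object up to isomorphism with only the identity morphism.

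The main obstacle is not either coproduct calculation per se, but rather the bookkeeping needed to confirm that span composition on one side matches pullback in the slice on the other side. Concretely, one must verify that precomposing a span $A+B \leftarrow M \rightarrow C$ with $A \xleftarrow{\id} A \xrightarrow{\iota_A} A+B$ produces the span whose apex is $A \times_{A+B} M$, and that under the extensivity decomposition $M \cong M_A + M_B$ this apex is canonically $M_A$; only then does the abstract slice equivalence translate into the concrete statement that the inclusion $\Cat \hookrightarrow \Span_\Cat$ preserves the coproduct.
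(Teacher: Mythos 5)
Your proposal is correct and follows essentially the same route as the paper: identify $\Span_{\Cat}(A,B)$ with $\Cat/(A\times B)$, use distributivity to rewrite $(A+B)\times C$ as a coproduct, and then apply the extensivity equivalence $\Cat/\coprod_i (A_i\times C)\simeq\prod_i\Cat/(A_i\times C)$. The extra bookkeeping you flag---checking that the equivalence is actually implemented by precomposition with the inclusion spans, i.e.\ by pullback along $\iota_i\times\id_C$---is a legitimate point that the paper's one-line chain of isomorphisms glosses over, but it is exactly the verification needed and your sketch of it is right.
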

\begin{proof}
    Let $A = \coprod_{i \in I} A_i$ be a coproduct in $\Cat$. Then, for all $B\in\ob(\Cat)$,
    \begin{equation*}
        \Span_{\Cat}\left(A, B\right) 
        = \Cat / (A \times B)
        \simeq\Cat / \coprod_{i \in I} \left(A_i \times B\right) 
        \simeq \prod_{i \in I} \Cat / \left(A_i \times B\right)
        = \prod_{i \in I} \Span_{\Cat}\left(A_i, B\right),
    \end{equation*}
    therefore $A$ is the coproduct of $(A_i)_{i \in I}$ in $\Span_{\Cat}$.
\end{proof}

\begin{proposition}\label{prop:coproducts_lenses}
    If $\Cat$ is a lextensive category, then $\DLens_{\Cat}$ has finite coproducts.
\end{proposition}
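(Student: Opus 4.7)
The plan is to invoke \cref{prop:coproducts_optics} with $\B = \Span_{\Cat}$ and $\icatL = \icatR = \Cat/\anon$. This reduces the problem to verifying two hypotheses: first, that $\Span_{\Cat}$ has finite coproducts, and second, that the pseudofunctor $\Cat/\anon\colon\Span_{\Cat}\superscript{op}\rightarrow \CAT$ sends finite coproducts in $\Span_{\Cat}$ to finite products in $\CAT$.

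For the first hypothesis, I would simply appeal to \cref{lm:coproducts_spans}: since $\Cat$ is lextensive, it in particular has finite coproducts, and the lemma tells us that the canonical inclusion $\Cat \hookrightarrow \Span_{\Cat}$ preserves them. Hence, if $A = \coprod_{i \in I} A_i$ in $\Cat$ (with $I$ finite), then $A$ is also the coproduct of the $A_i$ in $\Span_{\Cat}$.

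For the second hypothesis, the key ingredient is the defining property of a lextensive category: slicing over a finite coproduct decomposes as a product of slices, i.e., there is a canonical equivalence
\begin{equation*}
    \Cat / \coprod_{i \in I} A_i \simeq \prod_{i \in I} \Cat / A_i.
\end{equation*}
This is precisely the statement that $\Cat/\anon$, viewed as a functor $\Cat\superscript{op}\rightarrow \CAT$, turns finite coproducts in $\Cat$ into finite products in $\CAT$. Since the coproducts in $\Span_{\Cat}$ are inherited from $\Cat$ via the inclusion (by \cref{lm:coproducts_spans}), the same equivalence shows that $\Cat/\anon\colon \Span_{\Cat}\superscript{op}\rightarrow \CAT$ enjoys the desired coproduct-to-product property.

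With both hypotheses in place, \cref{prop:coproducts_optics} immediately yields that $\DLens_{\Cat} = \Optic_{\Cat/\anon, \Cat/\anon}$ has finite coproducts. I do not anticipate a serious obstacle here: the only point requiring care is the interplay between coproducts in $\Cat$ and coproducts in $\Span_{\Cat}$, but this is exactly what \cref{lm:coproducts_spans} handles, and the rest is bookkeeping.
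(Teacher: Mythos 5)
Your proposal is correct and follows essentially the same route as the paper: appeal to \cref{lm:coproducts_spans} for finite coproducts in $\Span_{\Cat}$, use the lextensivity equivalence $\Cat/\coprod_{i} A_i \simeq \prod_{i} \Cat/A_i$ to see that $\Cat/\anon$ sends coproducts to products, and conclude via \cref{prop:coproducts_optics}. Your extra remark about transferring the product condition from $\Cat$ to $\Span_{\Cat}$ along the inclusion is a point the paper leaves implicit, but it is the same argument.
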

\begin{proof}
    By \cref{lm:coproducts_spans}, $\Span_{\Cat}$ has finite coproducts, given by coproducts in $\Cat$. It is straightforward to show that $\Cat/\anon$ turns coproducts into products, as
    \begin{equation*}
        \Cat / A = \Cat / \coprod_{i \in I} A_i \simeq \prod_{i \in I} \Cat / A_i. 
    \end{equation*}
    Thanks to \cref{prop:coproducts_optics}, $\DLens_{\Cat}$ has finite coproducts.
\end{proof}

\subsection{Dependent monoidal lenses}

The construction in \cref{sec:dependent_lenses} can be generalized to a symmetric monoidal category $(\Cat, \otimes)$ with reflexive equalizers that are preserved by the tensor product. This is analogous to the approach taken in~\cite{spivak2019generalized} to generalize lenses to symmetric monoidal categories via commutative comonoids.

Let $B$ be a comonoidal object in $\Cat$. Given a right $B$-comodule $M$ and a left $B$-comodule $N$, we can define their tensor product over $B$ as the following equalizer:
\begin{equation*}
    M \otimes_B N := \mathrm{eq}(M \otimes N \rightrightarrows M \otimes B \otimes N).
\end{equation*}
The category of commutative comonoids in $\ob(\Cat)$, denoted $\CComon_{\Cat, \otimes}$, has finite limits: the pullback of two $B$-coalgebras $Y_1, Y_2$ is isomorphic to the tensor product $Y_1 \otimes_B Y_2$ (see~\cite[C1.1~Lm.~1.1.8]{johnstone2002sketches} and subsequent discussion for the dual statement). Let $\CCoalg_{(\anon)}$ and $\Comod_{(\anon)}$ be the $\Span_{\CComon_{\Cat, \otimes}}$-indexed categories of commutative coalgebras and comodules respectively, where functoriality is given by extension and restriction of scalars. We define the category of {\em dependent monoidal lenses} as follows:
\begin{equation*}
    \DLens_{\Cat, \otimes} := \Optic_{\CCoalg_{(\anon)}, \Comod_{(\anon)}}.
\end{equation*}
A computation analogous to the one in \cref{eq:dependent_lens_derivation} yields the following explicit formula:
\begin{equation*}
    \DLens_{\Cat, \otimes}\left((X, X')^{A}, (Y, Y')^{B}\right) \simeq \coprod_{X \rightarrow Y} \Comod_A(X\otimes_B Y', X'),
\end{equation*}
where the morphism $X \rightarrow Y$ varies among comonoid homomorphisms.

\Cref{prop:coproducts_lenses} can be generalized to the monoidal case, establishing sufficient conditions for the existence of finite coproducts in $\DLens_{\Cat, \otimes}$. In the following proposition, we rely on the fact that, whenever $\Cat$ has finite coproducts, the forgetful functor $\CMon_{\Cat, \otimes} \rightarrow \Cat$ creates coproducts in $\CMon_{\Cat, \otimes}$. See the proof of~\cite[Prop.~1.2.14]{marty2009ouverts} for the dual statement, which concerns limits of commutative monoids rather than colimits of commutative comonoids.

\begin{proposition}\label{prop:coproducts_lenses_monoidal}
    Let $(\Cat, \otimes)$ be a symmetric monoidal category, with reflexive equalizers that are preserved by the tensor product. Let us assume that $\Cat$ has finite coproducts, and that for all finite coproduct of commutative comonoids $A = \coprod_{i \in I} A_i$, the map
    \begin{equation}\label{eq:equivalence_comodules}
        \Comod_{A} \rightarrow \prod_{i \in I}\Comod_{A_i},
        \text{ given by }
        M \mapsto (M\otimes_A A_i)_{i \in I},
    \end{equation}
    is an equivalence of categories. Then, $\DLens_{\Cat}$ has finite coproducts.
\end{proposition}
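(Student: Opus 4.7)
The plan is to apply \cref{prop:coproducts_optics} to the data $\B = \Span_{\CComon_{\Cat, \otimes}}$, $\icatL = \CCoalg_{(\anon)}$, and $\icatR = \Comod_{(\anon)}$. Two ingredients must be verified: that $\Span_{\CComon_{\Cat, \otimes}}$ has finite coproducts, and that both pseudofunctors $\CCoalg_{(\anon)}$ and $\Comod_{(\anon)}$ transport those coproducts to products in $\CAT$.

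For the first ingredient, I use the fact cited just before the proposition, that finite coproducts in $\Cat$ are created by the forgetful functor $\CComon_{\Cat, \otimes} \to \Cat$, so that $\CComon_{\Cat, \otimes}$ itself admits finite coproducts. To upgrade these to coproducts in the span bicategory, I adapt \cref{lm:coproducts_spans}: given $A = \coprod_{i \in I} A_i$ and $B$ in $\CComon_{\Cat, \otimes}$, the equivalence to verify is
\begin{equation*}
    \Span_{\CComon_{\Cat, \otimes}}(A, B) = \CComon_{\Cat, \otimes}/(A \times B) \simeq \prod_{i \in I}\CComon_{\Cat, \otimes}/(A_i \times B),
\end{equation*}
which reduces, via distributivity of $\times$ over $\coprod$ in $\CComon_{\Cat, \otimes}$, to showing that slicing sends coproducts to products. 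Since $\CComon_{\Cat, \otimes}/C \simeq \CCoalg_C$, this is precisely the coalgebra analog of hypothesis \cref{eq:equivalence_comodules}.

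For the second ingredient, the required behaviour of $\Comod_{(\anon)}$ is literally hypothesis \cref{eq:equivalence_comodules}. The analogous statement for $\CCoalg_{(\anon)}$ follows by restricting that equivalence to the full subcategory of comodules carrying a compatible commutative comonoid structure; this restriction is legitimate because extension of scalars along each inclusion $\iota_i \colon A_i \hookrightarrow A$ is symmetric monoidal and thus preserves commutative comonoid objects. Putting everything together, \cref{prop:coproducts_optics} applies and yields finite coproducts in $\DLens_{\Cat, \otimes}$. The main technical obstacles are verifying that \cref{eq:equivalence_comodules} restricts cleanly from $\Comod$ to $\CCoalg$, and that $\times$ distributes over $\coprod$ in $\CComon_{\Cat, \otimes}$; both should follow from the compatibility of the forgetful functor with coproducts together with the symmetric monoidal structure on categories of comodules.
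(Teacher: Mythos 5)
Your overall route is the paper's: invoke \cref{prop:coproducts_optics} for $\B = \Span_{\CComon_{\Cat,\otimes}}$, $\icatL = \CCoalg_{(\anon)}$, $\icatR = \Comod_{(\anon)}$, deduce the coalgebra-level equivalence from the comodule-level hypothesis \cref{eq:equivalence_comodules} being (symmetric) monoidal, and use it both to get coproducts in the span bicategory and to show the two indexed categories turn coproducts into products. (One small slip: $\CCoalg_A$ is not a \emph{full} subcategory of $\Comod_A$ --- coalgebra morphisms must also respect comultiplication and counit --- but the correct statement, that a symmetric monoidal equivalence induces an equivalence on categories of commutative comonoids, is exactly what the paper uses.)

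The genuine soft spot is your justification of distributivity of $\times$ over $\coprod$ in $\CComon_{\Cat,\otimes}$. This does \emph{not} follow from ``the compatibility of the forgetful functor with coproducts together with the symmetric monoidal structure'': the binary product in $\CComon_{\Cat,\otimes}$ is the tensor product $\otimes$ of $\Cat$ (the pullback of coalgebras over the unit comonoid), and the hypotheses only assume $\otimes$ preserves reflexive equalizers, not coproducts, so there is no direct transfer of distributivity along the forgetful functor. The fix uses an ingredient you already have: the coalgebra analog of \cref{eq:equivalence_comodules}, i.e.\ that pullback along the coproduct injections induces an equivalence $\CComon_{\Cat,\otimes}/A \simeq \prod_{i\in I}\CComon_{\Cat,\otimes}/A_i$, is precisely extensivity of $\CComon_{\Cat,\otimes}$ (in the presence of pullbacks along coproduct injections; this is the equivalence of conditions the paper cites from Lack--Soboci\'nski). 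Since $\CComon_{\Cat,\otimes}$ also has finite limits, it is lextensive, hence distributive, and then \cref{lm:coproducts_spans} applies verbatim to give finite coproducts in $\Span_{\CComon_{\Cat,\otimes}}$ --- this is exactly how the paper closes the step you left as an ``obstacle''. With that repair your argument coincides with the paper's proof.
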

\begin{proof}
    The equivalence in \cref{eq:equivalence_comodules} is monoidal, hence the functor
    \begin{equation}\label{eq:equivalence_coalgebras}
        \CCoalg_{A} \rightarrow \prod_{i \in I}\CCoalg_{A_i},
        \text{ given by }
        X \mapsto (X\otimes_A A_i)_{i \in I},
    \end{equation}
    is an equivalence. As $\CCoalg_{(\anon)} = \CComon_{\Cat, \otimes}/\anon$, the category $\CComon_{\Cat, \otimes}$ is lextensive. Indeed, in the presence of pullbacks along coproduct injections, \cref{eq:equivalence_coalgebras} is a condition equivalent to extensivity, as shown in~\cite[Prop.~1.3]{lack2005adhesive}. Thanks to \cref{lm:coproducts_spans}, the category $\Span_{\CComon_{\Cat, \otimes}}$ has finite coproducts, given by coproducts in $\Cat$. It follows from \cref{eq:equivalence_comodules,eq:equivalence_coalgebras} that the functors $\Comod_{(\anon)}$ and $\CCoalg_{(\anon)}$ turn finite coproducts into products. Thanks to \cref{prop:coproducts_optics}, $\DLens_{\Cat, \otimes}$ has finite coproducts.
\end{proof}

\subsection{Dependent (monoidal) prisms}

Dependent prisms are dual to dependent lenses.
Let $\Cat$ be a finitely cocomplete category. Let $\Cospan_{\Cat} = \Span_{\Cat\superscript{{op}}}$ be its bicategory of cospans. Let $\anon / \Cat$ be the $\Cospan_{\Cat}$-indexed category of coslices. We define the category of {\em dependent prisms} as follows:
\begin{equation*}
    \DPrism_{\Cat} := \Optic_{\anon / \Cat, \anon / \Cat}.
\end{equation*}
Objects are given by spans $X \leftarrow A \rightarrow X'$.
Morphisms between two spans $X \leftarrow A \rightarrow X'$ and $Y \leftarrow B \rightarrow Y'$ are given by
\begin{equation}\label{eq:morphisms_prisms}
    \int^{M \in (A \sqcup B) / \Cat} A / \Cat(X, M \sqcup_B Y) \times A / \Cat(M \sqcup_B Y', X').
\end{equation}
As is the case for dependent lenses, the coend in \cref{eq:morphisms_prisms} can be computed explicitly:
\begin{equation*}
    \DPrism_{\Cat}\left((X, X')^{A}, (Y, Y')^{B}\right) \simeq \coprod_{Y' \rightarrow X'} {A / \Cat}(X, X' \sqcup_B Y).
\end{equation*}

Dependent monoidal prisms are dual to dependent monoidal lenses. Given a symmetric monoidal category $(\Cat, \otimes)$ with reflexive coequalizers that are preserved by the tensor product, let $\CMon_{\Cat, \otimes}$ be the category of commutative monoids in $\ob(\Cat)$. Let $\CAlg_{(\anon)}$ and $\Mod_{(\anon)}$ be the $\Cospan_{\CMon_{\Cat, \otimes}}$-indexed categories of commutative algebras and modules respectively, where functoriality is given by extension and restriction of scalars. We define the category of {\em dependent monoidal prisms} as follows:
\begin{equation*}
    \DPrism_{\Cat, \otimes} := \Optic_{\Mod_{(\anon)}, \CAlg_{(\anon)}}.
\end{equation*}
Morphisms can be computed via the explicit formula
\begin{equation*}
    \DPrism_{\Cat, \otimes}\left((X, X')^{A}, (Y, Y')^{B}\right) \simeq \coprod_{Y' \rightarrow X'} {A / \Cat}(X, X' \otimes_B Y),
\end{equation*}
where the morphism $Y' \rightarrow X'$ varies among monoid homomorphisms.

\subsection{Closed dependent optics}

Using a technique analogous to {\em coalgebraic optics}~\cite{riley2018categories}, it is sometimes possible to explicitly compute the coend in the $\Optic$ category using a right adjoint technique.

Let $\B$ be a bicategory, and $\icatL, \icatR$ be $\B$-indexed categories.
We say that $\Optic_{\icatL, \icatR}$ is a category of {\em closed dependent optics} if, for any $A, B \in \ob(\B)$ and $Y' \in \ob(\icatL(B))$, the functor $(\anon)\starL Y' \colon \B(A, B) \rightarrow \icatR^{A}$ has a right adjoint $Y' \triangleright \anon \colon \icatR^{A} \rightarrow \B(A, B)$.
Whenever that is the case, \cref{eq:morphisms} can be greatly simplified.
\begin{align*}
    \int^{f \in \B(A, B)} \icatL^{A}(X, f\starL Y) \times \icatR^{A}(f\starR Y', X') 
    &\simeq \int^{f \in \B(A, B)} \icatL^{A}(X, f\starL Y) \times \B(f, Y' \triangleright X')\\
    &\simeq \icatL^{A}(X, (Y' \triangleright X')\starL Y).
\end{align*}

A possible application of this technique is based on the bicategory of bimodules~\cite[Ex.~2.5]{benabou1967introduction} $\mathbf{Bimod}$ and on the $\mathbf{Bimod}$-indexed category $\Mod_{(\anon)}$. There, $Y' \triangleright X' = [Y', X']$, considered as an $(A, B)$-bimodule, hence morphisms in $\Optic_{\Mod_{(\anon)}, \Mod_{(\anon)}}$ can be computed explicitly:
\begin{equation*}
    \Optic_{\Mod_{(\anon)}, \Mod_{(\anon)}}\left((X, X')^{A}, (Y, Y')^{B}\right) = \Mod_A(X, [Y', X'] \otimes_B Y).
\end{equation*}

\section{Tambara representations}
\label{sec:tambara}

Tambara modules~\cite{pastro2007doubles} can be useful to define an {\em interface} for optics that does not depend on a choice of representative. Here, we adapt the notion of {\em generalized Tambara module} from~\cite{clarke2020profunctor} to the dependent case, and we generalize it to an arbitrary target category. For simplicity of notation, throughout this section we fix a bicategory $\B$ and two $\B$-indexed categories $\icatL$ and $\icatR$ (with coherence isomorphisms $\theta, \theta'$ respectively), and we write $\Optic$ instead of $\Optic_{\icatL, \icatR}$.

\begin{definition}\label{def:tambara}
    Let $\D$ be a category. A {\em $\D$-valued Tambara representation} consists of
    \begin{itemize}
        \item a functor $P^A \colon \left(\icatL^{A}\right)\superscript{op}\times \icatR^{A}\rightarrow \D$, for each object $A$ in $\B$,
        \item a natural transformation $\zeta_f \colon P^B(\anonfirst, \anonsecond)\Rightarrow P^A(f\starL\anonfirst, f\starR\anonsecond)$, for each morphism $f\colon A \rightarrow B$ in $\B$,
    \end{itemize}
    where $\zeta_f$ is extranatural in $f$ and satisfies the equations
    \begin{equation*}
        P^A(\theta_A, \theta_{A}^{\prime{-1}})\circ\zeta_{\id_A} = \id_{P^A}
        \quad \text{ and } \quad
        P^A(\theta_{f,g}, \theta_{f,g}^{\prime{-1}})\circ\zeta_{g\circ f}
        = (\zeta_{f})_{g\starL(\anonfirst),g\starR(\anonsecond)}\circ\zeta_{g},
    \end{equation*}
    for all $A, B, C \in \ob(\B)$, $f\colon A \rightarrow B$, and $g \colon B \rightarrow C$.
\end{definition}

\begin{remark}
    As the target category is arbitrary, $P$ does not correspond to a module over an enriched category, hence we find the name {\em representation} more appropriate.
\end{remark}

In more graphical terms, when $\D = \Set$ and thus $P^A, P^B$ are profunctors, the relationship between $P^A$, $P^B$, and $\zeta_f$ can be visualized via the following 2-cell in $\PROF$.
\begin{equation*}
    \begin{tikzcd}
        \icatL^B \arrow[d, "f\starL"{left}] \arrow[r,"|" anchor=center, "P^B"{yshift=2.5pt}, ""{name=U, inner sep=5pt,below}] &
        \icatR^B \arrow[d, "f\starR"]\\
        \icatL^{A} \arrow[r, "|" anchor=center, "P^A"{below,yshift=-2.5pt}, ""{name=D, inner sep=5pt}] &
        \icatR^{A}
        \arrow[Rightarrow, from=U, to=D, "\zeta_f"]
    \end{tikzcd}
\end{equation*}
$\Set$-valued Tambara representations can therefore be thought of as {\em lax $\B$-indexed profunctors}.

\begin{definition}
    Morphisms between Tambara representations $(P, \zeta), (Q, \zeta')$ are natural transformations $\eta^A\colon P^A \Rightarrow Q^A$ satisfying
    \begin{equation}\label{eq:tambara_morphism}
        \eta^A_{f\starL(\anonfirst), f\starR(\anonsecond)} \circ \zeta_f = \zeta'_f\circ \eta^B.
    \end{equation}
    $\D$-valued Tambara representations and their morphisms form a category, which we denote $\Tamb_{\D}$.
\end{definition}

Functors from $\Optic\superscript{op}$ to an arbitrary category can be described explicitly, thanks to \cref{thm:tambara_encoding}. The rest of the section is devoted to proving that result, via some intermediate steps, and exploring its consequences.

\subsection{The universal Tambara representation}
\label{sec:universal_tambara}

We aim to establish that all Tambara representations can be expressed as a composition of a functor with a {\em universal Tambara representation} $\iota\superscript{op}$. Here, we will define $\iota$ and prove that $\iota\superscript{op}$ is a Tambara representation. In \cref{sec:tambara_encoding}, we will show universality. 
\begin{definition}\label{def:iota}
    Let $A \in \ob(\B)$. The functor $\iota^A\colon \icatL^A \times (\icatR^A)\superscript{op} \rightarrow \Optic$ is defined as follows. Given an object $(X, X')$, $\iota^A(X, X') := (X, X')^A$. Given a morphism
    \begin{equation*}
        (l, r) \colon (X_0, X_0') \rightarrow (X_1, X_1'),
    \end{equation*}
    where $l\colon X_0 \rightarrow X_1$ and $r\colon X_1' \rightarrow X_0'$, we define $\iota^A(l, r)$ as the optic
    \begin{equation*}
        \angles{(\theta_A)_{X_1} \circ l}{r\circ(\theta^{\prime-1}_A)_{X_1'}} \colon (X_0, X_0')^A \rightarrow (X_1, X_1')^A
    \end{equation*}
    with representative $\id_A$.
\end{definition}

The following lemmas will make it much easier to do computations with $\iota$ and will allow us to prove that $\iota^A$ is indeed a functor and that $\iota\superscript{op}$ is a Tambara representation.

\begin{lemma}\label{lm:iota_right}
    Let $(X_0, X_0')^A, (X_1, X_1')^A \in \ob(\Optic)$. Let $l_1\colon X_0 \rightarrow X_1$ and $r_1\colon X_1' \rightarrow X_0'$. Then, for all optic $\angles{l_2}{r_2}$ with domain $(X_1, X_1')^A$,
    \begin{equation}\label{eq:iota_right}
        \angles{l_2}{r_2}\circ\iota^A(l_1, r_1) = \angles{l_2\circ l_1}{r_1\circ r_2}.
    \end{equation}
\end{lemma}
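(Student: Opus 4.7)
The plan is to unfold the composition formula \eqref{eq:morphism_composition} and then simplify using exactly the same tools as in the unity part of \cref{thm:optic_category}: naturality of $\theta_A, \theta'_A$, the identity coherence law for the pseudofunctors $\icatL, \icatR$, and the coend equivalence relation induced by the right unitor $\rho_f$ of $\B$.

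Concretely, suppose $\angles{l_2}{r_2}$ has representative $f\colon A \to B$, so $l_2\colon X_1 \to f\starL Y$ and $r_2\colon f\starR Y' \to X_1'$. By \cref{def:iota}, $\iota^A(l_1, r_1) = \angles{(\theta_A)_{X_1}\circ l_1}{r_1 \circ (\theta^{\prime-1}_A)_{X_1'}}$ with representative $\id_A$. Applying \eqref{eq:morphism_composition} yields
\begin{equation*}
    \angles{l_2}{r_2}\circ\iota^A(l_1, r_1) = \bigl\langle (\theta_{\id_A,f})_Y \circ \id_A\starL(l_2) \circ (\theta_A)_{X_1} \circ l_1 \,\big|\, r_1 \circ (\theta^{\prime-1}_A)_{X_1'} \circ \id_A\starR(r_2) \circ (\theta^{\prime-1}_{\id_A,f})_{Y'}\bigr\rangle,
\end{equation*}
with representative $f \circ \id_A$. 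The next step is to slide $l_2$ past $(\theta_A)_{X_1}$ using naturality of $\theta_A\colon \id_{\icatL^A}\Rightarrow \id_A\starL$ and to slide $r_2$ past $(\theta^{\prime-1}_A)_{X_1'}$ using naturality of $\theta^{\prime-1}_A\colon \id_A\starR\Rightarrow \id_{\icatR^A}$. This rewrites the left-hand component as $(\theta_{\id_A,f})_Y \circ (\theta_A)_{f\starL Y}\circ l_2 \circ l_1$ and the right-hand one as $r_1 \circ r_2 \circ (\theta^{\prime-1}_A)_{f\starR Y'} \circ (\theta^{\prime-1}_{\id_A,f})_{Y'}$.

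At this point, the identity coherence for a pseudofunctor into the strict 2-category $\CAT$ — exactly the identity used in the unity computation of \cref{thm:optic_category} — tells us that these two prefactors are precisely $\icatL(\rho_f)_Y^{-1}$ and $\icatR(\rho_f)_{Y'}$ respectively. Hence the optic becomes
\begin{equation*}
    \bigl\langle \icatL(\rho_f)_Y^{-1} \circ l_2 \circ l_1 \,\big|\, r_1 \circ r_2 \circ \icatR(\rho_f)_{Y'}\bigr\rangle,
\end{equation*}
still at representative $f\circ\id_A$. Finally, applying the coend equivalence relation with $m = \rho_f\colon f\circ \id_A \Rightarrow f$ moves $\icatR(\rho_f)_{Y'}$ across as $\icatL(\rho_f)_Y$ on the left, cancelling its inverse, and changes the representative to $f$. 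We are left with $\angles{l_2\circ l_1}{r_1\circ r_2}$, as claimed.

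The computation is entirely mechanical once the bookkeeping is set up; the only mildly delicate point is getting the directions of $\theta, \theta'$, $\rho_f$, and the coend equivalence consistent, but this is resolved by mirroring the manipulations already performed in the proof of \cref{thm:optic_category}.
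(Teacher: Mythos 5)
Your proof is correct and uses essentially the same ingredients as the paper: naturality of $\theta_A,\theta'_A$, the identity coherence law with the unitor $\rho_f$, and the coend relation. The only difference is one of packaging—the paper uses naturality twice to rewrite the composite as $\angles{l_2\circ l_1}{r_1\circ r_2}\circ\angles{(\theta_A)_{X_0}}{(\theta^{\prime-1}_A)_{X_0'}} = \angles{l_2\circ l_1}{r_1\circ r_2}\circ\id_{(X_0,X_0')^A}$ and then simply invokes the unit law already proved in \cref{thm:optic_category}, whereas you re-derive that unit-law computation inline.
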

\begin{proof}
    We use twice the fact that $\theta_A$ (resp. $\theta^{\prime-1}_A$) is a natural transformation $\id_{\icatL^A} \Rightarrow \id_A\starL$ (resp. $\id_A\starR \Rightarrow \id_{\icatR^A}$). Specifically,
    \begin{align*}
        \id_A\starL(l_2) \circ (\theta_A)_{X_1} \circ l_1 = (\theta_A)_{X_2} \circ l_2 \circ l_1 = \id_A\starL(l_2 \circ l_1) \circ (\theta_A)_{X_0}\\
        r_1 \circ (\theta^{\prime-1}_A)_{X_1'} \circ \id_A\starR(r_2) = r_1\circ r_2 \circ (\theta^{\prime-1}_A)_{X_2'} = (\theta^{\prime-1}_A)_{X_0'} \circ \id_A\starR(r_1\circ r_2).
    \end{align*}
    As a consequence,
    \begin{align*}
        \angles{l_2}{r_2}\circ\iota^A(l_1, r_1)
        &=\angles{l_2}{r_2}\circ\angles{(\theta_A)_{X_1} \circ l_1}{r_1\circ(\theta^{\prime-1}_A)}_{X_1'}\\
        &= \angles{l_2\circ l_1}{r_1\circ r_2}\circ\angles{(\theta_A)_{X_0}}{(\theta^{\prime-1}_A)_{X_0'}}\\
        &= \angles{l_2\circ l_1}{r_1\circ r_2}.
    \end{align*}
\end{proof}

\begin{lemma}\label{lm:iota_left}
    Let $(Y_1, Y_1')^B, (Y_2, Y_2')^B \in \ob(\Optic)$.  Let $l_2\colon Y_1 \rightarrow Y_2$ and $r_2\colon Y_2' \rightarrow Y_1'$. Then, for all optic $\angles{l_1}{r_1}$ with codomain $(Y_1, Y_1')^B$ and representative $f$,
    \begin{equation}\label{eq:iota_left}
        \iota^B(l_2, r_2) \circ \angles{l_1}{r_1} = \angles{f\starL(l_2) \circ l_1}{r_1\circ f\starR(r_2)}.
    \end{equation}
\end{lemma}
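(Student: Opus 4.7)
The plan is to unfold the composition using the explicit formula \eqref{eq:morphism_composition} and then reduce the result using the same ingredients that appear in the unity argument within the proof of \cref{thm:optic_category}. The essential observation is that $\iota^B(l_2, r_2)$ has representative $\id_B$ while $\angles{l_1}{r_1}$ has representative $f$, so the composite has representative $\id_B \circ f$; this differs from the desired representative $f$ only by the left unitor $\lambda_f$ of $\B$, and that discrepancy is what we must absorb.

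Concretely, I would first apply \eqref{eq:morphism_composition} with $g = \id_B$ and distribute $f\starL$ and $f\starR$ over the composites $(\theta_B)_{Y_2} \circ l_2$ and $r_2 \circ (\theta^{\prime-1}_B)_{Y_2'}$ that make up $\iota^B(l_2, r_2)$. The result is an optic whose two components each consist of the desired \emph{bare} map---$f\starL(l_2) \circ l_1$ on the left and $r_1 \circ f\starR(r_2)$ on the right---pre- or post-composed with a short stack of pseudofunctor coherence 2-cells involving $(\theta_{f, \id_B})_{Y_2}$, $f\starL((\theta_B)_{Y_2})$, $f\starR((\theta^{\prime-1}_B)_{Y_2'})$, and $(\theta^{\prime-1}_{f, \id_B})_{Y_2'}$.

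Next, I would invoke the pseudofunctor identity coherence law, exactly as done in the proof of the left unity $\id_{(Y, Y')^B} \circ \angles{l}{r} = \angles{l}{r}$ from \cref{thm:optic_category}, to rewrite $f\starR((\theta^{\prime-1}_B)_{Y_2'}) \circ (\theta^{\prime-1}_{f, \id_B})_{Y_2'}$ as $\icatR(\lambda_f)_{Y_2'}$ on the right component. The coend equivalence relation with the 2-cell $\lambda_f \colon \id_B \circ f \Rightarrow f$ then moves this term across to the left component as $\icatL(\lambda_f)_{Y_2}$, shifting the representative from $\id_B \circ f$ to $f$. A final application of the same coherence law, now in the form $\icatL(\lambda_f)_{Y_2} \circ (\theta_{f, \id_B})_{Y_2}\circ f\starL((\theta_B)_{Y_2}) = \id_{f\starL Y_2}$, collapses what remains of the coherence prefix and leaves precisely $\angles{f\starL(l_2) \circ l_1}{r_1 \circ f\starR(r_2)}$.

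The main obstacle is bookkeeping---tracking orientations and domains of several pseudofunctor coherence 2-cells simultaneously---but since the argument is structurally identical to the left unity calculation in \cref{thm:optic_category}, the genuinely substantive step has already been verified there and can simply be cited.
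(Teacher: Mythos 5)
Your proposal is correct and follows essentially the same route as the paper: the paper's proof likewise applies \cref{eq:morphism_composition}, uses functoriality of $f\starL$ and $f\starR$ to regroup the composite as $\angles{(\theta_B)_{Y_2}}{(\theta^{\prime-1}_B)_{Y_2'}} \circ \angles{f\starL(l_2)\circ l_1}{r_1\circ f\starR(r_2)}$, and then disposes of the identity optic via the unit law of \cref{thm:optic_category}---exactly the left-unity calculation you re-run inline and correctly note can simply be cited.
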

\begin{proof}
    As $f\starL$ and $f\starR$ are functors, \cref{eq:morphism_composition} implies that
    \begin{align*}
        \iota^B(l_2, r_2) \circ \angles{l_1}{r_1}
        &= \angles{(\theta_B)_{Y_2} \circ l_2}{r_2\circ(\theta^{\prime-1}_B)_{Y_2'}} \circ \angles{l_1}{r_1}\\
        &= \angles{(\theta_B)_{Y_2}}{(\theta^{\prime-1}_B)_{Y_2'}} \circ \angles{f\starL(l_2) \circ l_1}{r_1\circ f\starR(r_2)}\\
        &= \angles{f\starL(l_2) \circ l_1}{r_1\circ f\starR(r_2)}.
    \end{align*}
\end{proof}

\begin{proposition}\label{prop:iota_functor}
    For all $A \in \ob(\B)$, $\iota^A$ is a functor.
\end{proposition}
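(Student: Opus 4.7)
The plan is to verify the two functor axioms (preservation of identities and preservation of composition) by direct unfolding of \cref{def:iota}, relying on \cref{lm:iota_right} to avoid any coend manipulation. Since both lemmas already absorbed the interaction between $\theta_A$, $\theta^{\prime-1}_A$, and the coend equivalence relation, this proof should be essentially a bookkeeping exercise.

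For preservation of identities, I would observe that by \cref{def:iota},
\begin{equation*}
\iota^A(\id_X, \id_{X'}) = \angles{(\theta_A)_X \circ \id_X}{\id_{X'} \circ (\theta^{\prime-1}_A)_{X'}} = \angles{(\theta_A)_X}{(\theta^{\prime-1}_A)_{X'}},
\end{equation*}
which is precisely $\id_{(X, X')^A}$ by \cref{eq:identity_morphism}.

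For preservation of composition, given morphisms $(l_1, r_1)\colon(X_0, X_0')\to(X_1, X_1')$ and $(l_2, r_2)\colon(X_1, X_1')\to(X_2, X_2')$ in $\icatL^A\times(\icatR^A)\superscript{op}$, I want to show that
\begin{equation*}
\iota^A(l_2, r_2) \circ \iota^A(l_1, r_1) = \iota^A(l_2 \circ l_1, r_1 \circ r_2).
\end{equation*}
I would apply \cref{lm:iota_right} to the optic $\angles{l_2'}{r_2'} := \iota^A(l_2, r_2) = \angles{(\theta_A)_{X_2}\circ l_2}{r_2\circ(\theta^{\prime-1}_A)_{X_2'}}$ composed with $\iota^A(l_1, r_1)$, obtaining
\begin{equation*}
\iota^A(l_2, r_2) \circ \iota^A(l_1, r_1) = \angles{(\theta_A)_{X_2}\circ l_2 \circ l_1}{r_1 \circ r_2 \circ (\theta^{\prime-1}_A)_{X_2'}},
\end{equation*}
which by \cref{def:iota} equals $\iota^A(l_2 \circ l_1, r_1 \circ r_2)$ as desired.

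There is no substantive obstacle here: the difficulty was already dispatched in \cref{lm:iota_right}, where the naturality of $\theta_A$ and $\theta^{\prime-1}_A$ was used to slide the coherence isomorphisms through the composition. The present proof is therefore just a two-line application of that lemma plus an immediate identity check.
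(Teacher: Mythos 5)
Your proposal is correct and follows essentially the same route as the paper's own proof: the identity axiom is checked directly from \cref{def:iota} and \cref{eq:identity_morphism}, and preservation of composition is obtained by applying \cref{lm:iota_right} to $\iota^A(l_2, r_2)\circ\iota^A(l_1, r_1)$, exactly as the paper does (your writing the second argument of the composite as $r_1\circ r_2$ in $\icatR^A$ rather than the paper's opposite-category notation $r_2\circ r_1$ is only a notational difference).
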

\begin{proof}
    We must verify that $\iota^A$ preserves identity and composition of morphisms. Preservation of identity is straightforward, as
    \begin{equation*}
        \iota^A(\id_X, \id_{X'}) = \angles{(\theta_A)_{X}}{(\theta^{\prime-1}_A)_{X'}} = \id_{(X, X')^A}.
    \end{equation*}
    Let us now consider morphisms
    \begin{equation*}
        X_0 \xrightarrow{l_1} X_1 \xrightarrow{l_2} X_2
        \quad \text{ and } \quad
        X_2' \xrightarrow{r_2} X_1' \xrightarrow{r_1} X_0'.
    \end{equation*}
    Then, using \cref{eq:iota_right},
    \begin{align*}
        \iota^A(l_2, r_2) \circ \iota^A(l_1, r_1)
        &=\angles{(\theta_A)_{X_2}\circ l_2}{r_2 \circ (\theta^{\prime-1}_A)_{X_2'}}
        \circ \iota^A(l_1, r_1)\\
        &=\angles{(\theta_A)_{X_2}\circ l_2 \circ l_1}{r_1\circ r_2 \circ (\theta^{\prime-1}_A)_{X_2'}}\\
        &= \iota^A(l_2 \circ l_1, r_2 \circ r_1).
    \end{align*}
\end{proof}

\begin{proposition}\label{prop:iota_tambara}
    $\iota\superscript{op}$ is a $\Optic\superscript{op}$-valued Tambara representation, whose associated natural transformations are $\angles{\id_{f\starL(\anonfirst)}}{\id_{f\starR(\anonsecond)}}\superscript{op}$ (with representative $f$).
\end{proposition}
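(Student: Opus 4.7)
My plan is to verify the four clauses of Definition~\ref{def:tambara} for $\iota\superscript{op}$: functoriality of each $(\iota^A)\superscript{op}$, naturality of the proposed $\zeta_f$ in $(X, X')$, extranaturality in $f$, and the two coherence equations. Functoriality is immediate from Proposition~\ref{prop:iota_functor} upon taking opposites. For the remaining conditions I systematically translate each statement from $\Optic\superscript{op}$ into an equivalent equation in $\Optic$ (reversing composition order) and reduce both sides to a canonical form using Lemmas~\ref{lm:iota_left} and~\ref{lm:iota_right}.

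For naturality, given a morphism $(l, r)$ in $(\icatL^B)\superscript{op} \times \icatR^B$, the naturality square unfolds in $\Optic$ to
\begin{equation*}
    \iota^B(l, r) \circ \angles{\id_{f\starL X_0}}{\id_{f\starR X_0'}} = \angles{\id_{f\starL X_1}}{\id_{f\starR X_1'}} \circ \iota^A(f\starL l, f\starR r),
\end{equation*}
and the two sides collapse to $\angles{f\starL l}{f\starR r}$ with representative $f$ by Lemmas~\ref{lm:iota_left} and~\ref{lm:iota_right} respectively. For extranaturality under a 2-cell $m\colon f \Rightarrow g$, the same lemmas reduce the two sides to $\angles{\icatL(m)_X}{\id_{g\starR X'}}$ (representative $g$) and $\angles{\id_{f\starL X}}{\icatR(m)_{X'}}$ (representative $f$), which are identified by the coend equivalence $\left(\icatL(m)_Y\circ l,\, r\right) \sim \left(l,\, r \circ \icatR(m)_{Y'}\right)$ recorded just after Definition~\ref{def:optics}.

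The two coherence laws succumb to the same strategy. For the identity law, Lemma~\ref{lm:iota_right} simplifies $\angles{\id_{\id_A\starL X}}{\id_{\id_A\starR X'}} \circ \iota^A((\theta_A)_X, (\theta^{\prime-1}_A)_{X'})$ to $\angles{(\theta_A)_X}{(\theta^{\prime-1}_A)_{X'}}$, which is exactly $\id_{(X, X')^A}$ by \eqref{eq:identity_morphism}. For the composition law, both sides simplify to the optic $\angles{(\theta_{f,g})_Y}{(\theta^{\prime-1}_{f,g})_{Y'}}$ with representative $g \circ f$: the left-hand side via Lemma~\ref{lm:iota_right}, and the right-hand side via the composition formula \eqref{eq:morphism_composition}, with all the identity factors $f\starL(\id)$ and $f\starR(\id)$ collapsing.

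The main bookkeeping hazard is tracking direction-reversal when switching between $\Optic$ and $\Optic\superscript{op}$ alongside the variance of the two coordinates of $\iota^A$; however, Lemmas~\ref{lm:iota_right} and~\ref{lm:iota_left} absorb all coherence isomorphisms $\theta, \theta'$ as soon as an $\iota$-morphism meets a bare $\angles{l}{r}$, so no delicate coherence chase is required beyond the elementary ones already built into those lemmas.
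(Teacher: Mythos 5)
Your proposal is correct and follows essentially the same route as the paper: naturality and extranaturality are checked by collapsing both sides with Lemmas~\ref{lm:iota_left} and~\ref{lm:iota_right} (with the coend relation supplying the middle identification for the 2-cell $m$), and the identity and composition coherence laws reduce via \eqref{eq:identity_morphism}, Lemma~\ref{lm:iota_right}, and \eqref{eq:morphism_composition} exactly as in the paper's proof.
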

\begin{proof}
    By inverting all morphisms in \cref{def:tambara}, we work with the category $\Optic$ rather than $\Optic\superscript{op}$. Let $f\colon A \rightarrow B$. First, we need to ensure that
    \begin{equation*}
        \angles{\id_{f\starL(\anonfirst)}}{\id_{f\starR(\anonsecond)}}\colon
        \iota^A(f\starL(\anonfirst), f\starR(\anonsecond)) \Rightarrow \iota^B(\anonfirst, \anonsecond)
    \end{equation*}
    is a natural transformation. Let us consider morphisms $l \colon Y_0\rightarrow Y_1$ and $r\colon Y_1'\rightarrow Y_0'$.
    Then, by \cref{eq:iota_left,eq:iota_right},
    \begin{equation*}
        \iota^B(l, r)\circ \angles{\id_{f\starL Y_0}}{\id_{f\starR Y_0'}}
        = \angles{f\starL(l)}{f\starR(r)}
        = \angles{\id_{f\starL Y_1}}{\id_{f\starR Y_1'}}\circ\iota^A(f\starL(l), f\starR(r)).
    \end{equation*}
    Next, we must show that $\zeta_f$ is extranatural in $f$. Let us consider $f,g \colon A \rightrightarrows B$ and $m\colon f \Rightarrow g$. The following diagram commutes for all $Y \in \ob(\icatL^B)$, $Y' \in \ob(\icatR^B)$.
    \begin{equation*}
        \begin{tikzcd}
            \iota^A(f\starL Y, g\starR Y') \arrow[Rightarrow]{d} \arrow[Rightarrow]{r} &
            \iota^A(f\starL Y, f\starR Y') \arrow[Rightarrow]{d}\\
            \iota^A(g\starL Y, g\starR Y') \arrow[Rightarrow]{r} &
            \iota^B(Y, Y')
        \end{tikzcd}
    \end{equation*}
    This can be show by direct computation, using \cref{eq:iota_right}:
    \begin{align*}
        \angles{\id_{g\starL Y}}{\id_{g\starR Y'}}\circ\iota^A(\icatL(m)_{Y}, \id_{g\starR Y'})
        &= \angles{\icatL(m)_{Y}}{\id_{g\starR Y'}}\\
        &= \angles{\id_{f\starL Y}}{\icatR(m)_{Y'}}\\
        &= \angles{\id_{f\starL Y}}{\id_{f\starR Y'}}\circ\iota^A(\id_{f\starL Y}, \icatR(m)_{Y'}).
    \end{align*}
    Finally, we need to show the coherence laws for Tambara representations. The identity law is straightforward, as
    \begin{equation*}
        \angles{\id_{\id_A\starL X}}{\id_{\id_A\starR X'}}\circ\iota^A((\theta_A)_X, (\theta^{\prime-1}_A)_{X'}) = \angles{(\theta_A)_X}{(\theta^{\prime-1}_A)_{X'}} = \id_{(X, X')^A}.
    \end{equation*}
    Thanks to \cref{eq:iota_right}
    \begin{equation*}
        \angles{\id_{(g\circ f)\starL Z}}{\id_{(g\circ f)\starR Z'}}
        \circ \iota^A((\theta_{f, g})_{Z}, (\theta^{\prime-1}_{f, g})_{Z'})
        = \angles{(\theta_{f, g})_{Z}}{(\theta^{\prime-1}_{f, g})_{Z'}}.
    \end{equation*}
    Thanks to \cref{eq:morphism_composition},
    \begin{equation*}
        \angles{\id_{g\starL Z}}{\id_{g\starR Z'}}\circ \angles{\id_{f\starL(g\starL Z)}}{\id_{f\starR(g\starR Z')}}
        = \angles{(\theta_{f, g})_{Z}}{(\theta^{\prime-1}_{f, g})_{Z'}}.
    \end{equation*}
    Hence, the composition law holds and $\iota$ is a $\Optic\superscript{op}$-valued Tambara representation.
\end{proof}

\subsection{Tambara encoding}
\label{sec:tambara_encoding}

In this section, we establish that there is a functor $[\Optic\superscript{op}, \D] \rightarrow \Tamb_{\D}$ given by composition with the universal Tambara representation $\iota$. Furthermore, this functor is an isomorphism of categories, which we will show in \cref{thm:tambara_encoding}. This isomorphism will allow us to recover a classical {\em end formula} linking Tambara representations and optics. We start by showing that composition of a functor and a Tambara representation yields a Tambara representation.

\begin{proposition}\label{prop:functorial_tambara}
    Let $\Cat, \D$ be arbitrary categories. There is a functor $[\Cat, \D] \times \Tamb_{\Cat} \rightarrow \Tamb_{\D}$ given by composition.
\end{proposition}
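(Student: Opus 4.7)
The plan is to define the functor on objects by sending $(F, (P, \zeta))$, with $F\colon \Cat \rightarrow \D$ and $(P, \zeta) \in \Tamb_{\Cat}$, to $(F \circ P, F\zeta)$, where $(F \circ P)^A := F \circ P^A$ and $(F\zeta)_f$ is obtained by whiskering, with components $F((\zeta_f)_{Y, Y'})$ at each $(Y, Y') \in \ob(\icatL^B) \times \ob(\icatR^B)$. On morphisms, a pair $(\alpha, \eta)$---with $\alpha\colon F \Rightarrow G$ a natural transformation and $\eta\colon (P, \zeta) \to (Q, \zeta')$ a morphism of Tambara representations---is sent to the family whose $A$-component is the horizontal composite $\alpha \ast \eta^A\colon F \circ P^A \Rightarrow G \circ Q^A$.

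First I would verify that $(F \circ P, F\zeta)$ is itself a $\D$-valued Tambara representation. Naturality of each $(F\zeta)_f$ in its two arguments, and extranaturality in $f$, transfer from the analogous properties of $\zeta_f$ by applying $F$ to the relevant commuting squares, since functors preserve composition. The identity and composition coherence axioms from \cref{def:tambara} for the pair $(F \circ P, F\zeta)$ are obtained by applying $F$ to the corresponding axioms for $(P, \zeta)$ and using functoriality of $F$ to distribute it over compositions and send identities to identities.

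Next I would check that $\alpha \ast \eta$ satisfies \cref{eq:tambara_morphism} with respect to $F\zeta$ and $G\zeta'$. Expanding $(\alpha \ast \eta)^A$ as $\alpha_{Q^A(\anonfirst, \anonsecond)} \circ F\eta^A$, the image under $F$ of \cref{eq:tambara_morphism} for $\eta$ followed by naturality of $\alpha$ at $(\zeta'_f)_{Y, Y'}$ rewrites the left-hand side of \cref{eq:tambara_morphism} as the right-hand side. Finally, preservation of identities and of composition of morphisms in the product category $[\Cat, \D] \times \Tamb_{\Cat}$ follows from the interchange law for horizontal composition of natural transformations.

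The verifications are all routine; the main obstacle is purely organizational, namely keeping careful track of the various whiskerings and applying functoriality of $F$ or $G$ at the right place so as to commute them past compositions in $\Cat$ before invoking naturality of $\alpha$.
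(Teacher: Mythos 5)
Your proposal is correct and follows essentially the same route as the paper: the object part $(F\circ P, F\zeta)$ and the morphism-condition check via functoriality of $F$, the Tambara morphism equation for $\eta$, and naturality of the transformation $\alpha$ (the paper's $\mu$) reproduce the paper's computation for the horizontal composite. You are merely more explicit about the points the paper dispatches with ``all conditions are preserved by a functorial transformation'' and the routine functoriality/interchange check, so no substantive difference.
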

\begin{proof}
    All conditions for Tambara representations are preserved by a functorial transformation. Given natural transformation $\mu\colon F\Rightarrow G$ and a morphism of Tambara representations $\eta\colon P \Rightarrow Q$, it is straightforward to verify that the horizontal composition of $\mu$ and $\eta$ is a morphism of Tambara representations $F \circ P \Rightarrow G \circ Q$:
    \begin{align*}
        \mu_{Q^A(X, X')}\circ F(\eta^A_{X, X'}) \circ F((\zeta_f)_{Y, Y'})
        &= \mu_{Q^A(X, X')}\circ F(\eta^A_{X, X'} \circ (\zeta_f)_{Y, Y'})\\
        &= \mu_{Q^A(X, X')}\circ F((\zeta'_f)_{Y, Y'}\circ \eta^B_{Y, Y'})\\
        &= G((\zeta'_f)_{Y, Y'}\circ \eta^B_{Y, Y'})\circ \mu_{P^B(Y, Y')}\\
        &= G((\zeta'_f)_{Y, Y'})\circ G(\eta^B_{Y, Y'})\circ \mu_{P^B(Y, Y')}\\
        &= G((\zeta'_f)_{Y, Y'})\circ \mu_{Q^B(Y, Y')} \circ F(\eta^B_{Y, Y'}).
    \end{align*}
\end{proof}

\begin{theorem}\label{thm:tambara_encoding}
    Let $\D$ be a category. Let $\iota\superscript{op}$ be the universal Tambara representation. Then the functor 
    \begin{equation*}
        \anon \circ \iota\superscript{op}\colon [\Optic\superscript{op}, \D] \rightarrow \Tamb_{\D}
    \end{equation*}
    is an isomorphism of categories.
\end{theorem}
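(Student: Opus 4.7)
The plan is to construct an explicit inverse $\Psi\colon \Tamb_{\D}\rightarrow [\Optic\superscript{op},\D]$ to the functor $\anon\circ\iota\superscript{op}$. On objects, I set $\Psi(P,\zeta)\left((X,X')^A\right) := P^A(X,X')$; on a morphism represented by $\angles{l}{r}\colon (X,X')^A\rightarrow (Y,Y')^B$ with representative $f$, I set
\begin{equation*}
    \Psi(P,\zeta)(\angles{l}{r}) := P^A(l,r) \circ (\zeta_f)_{Y,Y'}.
\end{equation*}
The first task is well-definedness on coend equivalence classes: given a 2-cell $m\colon f\Rightarrow g$, after factoring out $P^A(l,r)$ via functoriality of $P^A$, the required identity reduces to
\begin{equation*}
    P^A(\icatL(m)_Y,\id)\circ (\zeta_g)_{Y,Y'} = P^A(\id,\icatR(m)_{Y'})\circ (\zeta_f)_{Y,Y'},
\end{equation*}
which is exactly the extranaturality of $\zeta$ in its subscript, evaluated at $(Y,Y')$.

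Functoriality of $\Psi(P,\zeta)$ then splits into two checks. Preservation of identities is immediate from the unit coherence law $P^A(\theta_A, \theta^{\prime-1}_A)\circ \zeta_{\id_A} = \id$. For composition, given $\angles{l_i}{r_i}$ with representatives $f$ and $g$, I unfold $\Psi(P,\zeta)(\angles{l_2}{r_2}\circ \angles{l_1}{r_1})$ using \cref{eq:morphism_composition} and simplify the result by successively invoking functoriality of $P^A$, naturality of $\zeta_f$ applied to $(l_2, r_2)$, and the composition coherence $P^A(\theta_{f,g}, \theta^{\prime-1}_{f,g})\circ \zeta_{g\circ f} = (\zeta_f)_{g\starL(\anonfirst),g\starR(\anonsecond)}\circ \zeta_g$. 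The outcome matches $\Psi(P,\zeta)(\angles{l_1}{r_1})\circ \Psi(P,\zeta)(\angles{l_2}{r_2})$ in $\D$, remembering that composition in $\Optic\superscript{op}$ is reversed. On morphisms of Tambara representations, I set $\Psi(\eta)_{(X,X')^A} := \eta^A_{X,X'}$; combining naturality of each $\eta^A$ with \cref{eq:tambara_morphism} yields naturality against every optic.

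It remains to show that $\Psi$ and $\anon\circ\iota\superscript{op}$ are mutually inverse. Starting from $F\colon\Optic\superscript{op}\rightarrow\D$, the composite $\Psi(F\circ \iota\superscript{op})$ evaluates on $\angles{l}{r}$ to $F$ applied to the composition $\angles{\id_{f\starL Y}}{\id_{f\starR Y'}}\circ \iota^A(l,r)$, which simplifies to $\angles{l}{r}$ by \cref{lm:iota_right}. Conversely, starting from $(P,\zeta)$, the composite $\Psi(P,\zeta)\circ \iota\superscript{op}$ returns $P^A(X,X')$ on objects and reproduces $P^A(l,r)$ on morphisms after absorbing the $\theta_A, \theta^{\prime-1}_A$ via the unit coherence law and functoriality of $P^A$; the induced Tambara structure evaluated at $\angles{\id_{f\starL Y}}{\id_{f\starR Y'}}$ returns $(\zeta_f)_{Y,Y'}$ tautologically.

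The main obstacle is the composition-preservation calculation for $\Psi(P,\zeta)$, which is the only step that intertwines the composition formula of \cref{eq:morphism_composition} with the composition coherence of a Tambara representation and the naturality of the $\zeta_f$; all other verifications are direct diagram-chases using extranaturality of $\zeta$, the identity coherence law, and \cref{lm:iota_right}.
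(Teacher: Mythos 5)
Your proposal is correct and takes essentially the same route as the paper: the same lift (your $\Psi$) defined by $\angles{l}{r}\mapsto P^A(l,r)\circ(\zeta_f)_{Y,Y'}$, with well-definedness from extranaturality of $\zeta$, functoriality from the two coherence laws, and the same pointwise assignment $\eta\mapsto\tilde\eta$ on morphisms of Tambara representations. The only difference is packaging: you present $\Psi$ as an explicit two-sided inverse, checking $\Psi(F\circ\iota\superscript{op})=F$ via \cref{lm:iota_right}, whereas the paper verifies $\tilde P\circ\iota\superscript{op}=P$ and then argues surjectivity on objects, fullness and faithfulness, leaving that companion identity implicit.
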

\begin{proof}
    Let $P$ be a $\D$-valued Tambara representation, with associated natural transformations $\zeta_f$. Let us define $\tilde P\colon \Optic\superscript{op} \rightarrow \D$ as follows. On objects,
    \begin{equation*}
        \tilde P\left((X, X')^{A}\right) = P^A(X, X').
    \end{equation*}
    To extend $\tilde P$ to morphisms, we proceed as follows. As $\zeta_f$ is extranatural in $f$, the map
    \begin{align*}
        \icatL^A(X, f\starL Y) \times \icatR^A(f\starR Y', X')
        &\rightarrow \D\left(
            P\left((Y, Y')^{B}\right),
            P\left((X, X')^{A}\right)
        \right)\\
        \angles{l}{r} &\mapsto P^A(l, r)\circ(\zeta_f)_{Y, Y'}
    \end{align*}
    induces a map
    \begin{equation*}
        \tilde P\colon\Optic\left((X, X')^{A}, (Y, Y')^{B}\right) \rightarrow \D\left(
            P\left((Y, Y')^{B}\right),
            P\left((X, X')^{A}\right)
        \right).
    \end{equation*}
    Preservation of identity and composition follows from the coherence laws of \cref{def:tambara}, hence $\tilde P$ is a functor.

    It is straightforward to verify that $\tilde P \circ \iota\superscript{op} = P$ as Tambara representations. On objects,
    \begin{equation*}
        (\tilde{P}\circ\iota\superscript{op})^A(X, X') = \tilde{P}((X, X')^A) = P^A(X, X').
    \end{equation*}
    On morphisms, given $l\colon X_0\rightarrow X_1$ and $r\colon X_1'\rightarrow X_0'$,
    \begin{align*}
        (\tilde{P}\circ\iota\superscript{op})^A(l, r)
        &= \tilde{P}(\angles{(\theta_A)_{X_1} \circ l}{r\circ(\theta^{\prime-1}_A)_{X_1'}})\\
        &= P((\theta_A)_{X_1} \circ l, r\circ(\theta^{\prime-1}_A)_{X_1'})\circ (\zeta_{\id_A})_{X_1, X_1'}\\
        &= P(l, r) \circ P((\theta_A)_{X_1}, (\theta^{\prime-1}_A)_{X_1'})\circ (\zeta_{\id_A})_{X_1, X_1'}\\
        &= P(l, r).
    \end{align*}
    Finally,
    \begin{equation*}
        \tilde P (\angles{\id_{f\starL Y}}{\id_{f\starR Y'}})
        = P^A(\id_{f\starL Y}, \id_{f\starR Y'}) \circ (\zeta_f)_{Y, Y'}
        = (\zeta_f)_{Y, Y'},
    \end{equation*}
    hence $\tilde P \circ \iota\superscript{op} = P$ as Tambara representations, so $\anon \circ \iota\superscript{op}$ is surjective on objects.
    
    Let $P, Q$ be Tambara representations, with associated natural transformation families $\zeta, \zeta'$ respectively. Let $\eta\colon P \Rightarrow Q$ be a morphism of Tambara representations. Then, we can define a natural transformation
    \begin{equation*}
        \tilde \eta \colon \tilde P \Rightarrow \tilde Q
        \quad \text{ given by } \quad
        \tilde \eta_{(X, X')^{A}} := \eta^A_{X, X'}.
    \end{equation*}
    Naturality can be verified via a direct computation:
    \begin{align*}
        Q(\angles{l}{r})\circ \tilde \eta_{(Y, Y')^{B}}
        &= Q^A(l, r)\circ (\zeta'_f)_{Y, Y'}\circ \eta^B_{Y, Y'}\\
        &= Q^A(l, r)\circ \eta^A_{f\starL Y, f\starR Y'} \circ (\zeta_f)_{Y, Y'}\\
        &= \eta^A_{X, X'} \circ P^A(l, r) \circ (\zeta_f)_{Y, Y'}\\
        &= \tilde \eta_{(X, X')^{A}} \circ P(\angles{l}{r}).
    \end{align*}
    Here, we have used \cref{eq:tambara_morphism} and the fact that $\eta^A$ is a natural transformation from $P^A$ to $Q^A$. As $\tilde \eta_{\iota^A(X, X')} = \tilde \eta_{(X, X')^A} = \eta^A_{X, X'}$, the functor $\anon \circ \iota\superscript{op}$ is full. Finally, if for all $A \in \ob(\B), X \in \ob(\icatL^A), X' \in \ob(\icatR^A)$,
    \begin{equation*}
        \eta^A_{X, X'} = \mu_{\iota^A(X, X')}
    \end{equation*}
    with $\mu \colon \tilde P \Rightarrow \tilde Q$, then $\mu = \tilde \eta$. Hence, the functor $\anon \circ \iota\superscript{op}$ is faithful.
\end{proof}

\Cref{thm:tambara_encoding} has wide practical applications, as it implies that a Tambara representation $P$ can be used as an {\em interface} for optics. In other words, an optic $o \in \Optic\left((X, X')^A, (Y, Y')^B\right)$ can be encoded as a family of morphisms
\begin{equation*}
    P^B(Y, Y') \rightarrow P^A(X, X'),
\end{equation*}
where $P$ varies among $\D$-valued Tambara representations. This encoding has several practical advantages. It does not depend on a choice of representative, it simplifies composition of optics---replacing the rule in \cref{eq:morphism_composition} with standard function composition---and it can be used to compose optics of different types, as discussed in~\cite{clarke2020profunctor}. Furthermore, the dependent version of a classical result---the {\em profunctor representation theorem}~\cite[Thm.~4.14]{clarke2020profunctor}---is a direct consequence of \cref{thm:tambara_encoding}, specialized to the case $\D = \Set$.

\begin{lemma}\label{lm:end_formula}
    Let $\Cat$ be a locally small category, and let $\hat\Cat$ denote its category of presheaves. Then, for all $S, T \in \ob(\Cat)$,
    \begin{equation}\label{eq:end_formula}
        \Cat(S, T) \simeq \int_{F \in \hat\Cat} \Set(F(T), F(S)).
    \end{equation}
\end{lemma}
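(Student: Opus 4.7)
The plan is to apply the Yoneda lemma twice, through the fully faithful Yoneda embedding $y \colon \Cat \to \hat\Cat$, $S \mapsto \Cat(\anon, S)$. The key observation is that the evaluation functor $\mathrm{ev}_S \colon \hat\Cat \to \Set$, $F \mapsto F(S)$, is corepresented by $yS$, since the Yoneda lemma gives a natural isomorphism $F(S) \simeq \hat\Cat(yS, F)$, and analogously for $T$. Substituting these on the right-hand side of \cref{eq:end_formula} rewrites the integrand as $\Set(\hat\Cat(yT, F), \hat\Cat(yS, F))$, whose end over $F \in \hat\Cat$ is, by the standard description of ends in functor categories into $\Set$, the set of natural transformations between the parallel covariant functors $\hat\Cat(yT, \anon), \hat\Cat(yS, \anon) \colon \hat\Cat \to \Set$.

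A second application of the Yoneda lemma---now to the corepresentable $\hat\Cat(yT, \anon)$---identifies natural transformations out of it into any functor $G \colon \hat\Cat \to \Set$ with $G(yT)$. Taking $G = \hat\Cat(yS, \anon)$ produces $\hat\Cat(yS, yT)$, and finally the full faithfulness of $y$ gives $\hat\Cat(yS, yT) \simeq \Cat(S, T)$. Chaining these isomorphisms yields the desired formula.

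The main obstacle is set-theoretic: $\hat\Cat$ is typically only locally small, not small, so the end on the right-hand side of \cref{eq:end_formula} must be interpreted with care. This is resolved by density: both $\mathrm{ev}_T$ and $\mathrm{ev}_S$ preserve (pointwise) colimits of presheaves, and every presheaf is a canonical colimit of representables, so a natural transformation $\mathrm{ev}_T \Rightarrow \mathrm{ev}_S$ is determined by its restriction to the small full subcategory of representables. On representables the end becomes the small end $\int_{X \in \Cat} \Set(\Cat(X, T), \Cat(X, S))$, where all manipulations above remain valid and directly compute to $\Cat(S, T)$ by Yoneda.
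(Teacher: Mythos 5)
Your proposal is correct and takes essentially the same route as the paper: the paper applies the Yoneda reduction lemma to the evaluation-at-$S$ functor at the representable $\Cat(\anon,T)$ and then uses the Yoneda lemma to identify $\hat\Cat(\Cat(\anon,T),F)$ with $F(T)$, which is exactly your double-Yoneda computation packaged slightly differently (you additionally rewrite $F(S)$ as $\hat\Cat(yS,F)$ and finish with full faithfulness of $y$). Your attention to the size of the end over the large category $\hat\Cat$ is a welcome extra precision that the paper leaves implicit.
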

\begin{proof}
    Let us consider the representable presheaf $\Cat(\anon, T)$. Applying the Yoneda reduction lemma~\cite[Lm.~1.2.2]{riley2018categories} to the evaluation-at-$S$ functor $\hat\Cat \rightarrow \Set$, we obtain
    \begin{equation*}
        \Cat(S, T) = \Cat(\anon, T)(S) \simeq \int_{F \in \hat\Cat} \Set(\hat\Cat(\Cat(\anon, T), F), F(S))
        \simeq \int_{F \in \hat\Cat} \Set(F(T), F(S)),
    \end{equation*}
    where in the last step we have used the Yoneda lemma to compute $\hat\Cat(\Cat(\anon, T), F)$.
\end{proof}

\begin{theorem}\label{thm:tambara_representation}\cite[Thm.~3.5]{capucci2022seeing}
    Let $(X, X')^A$ and $(Y, Y')^B$ be objects in $\Optic$. Then,
    \begin{equation}\label{eq:tambara_representation}
        \Optic\left((X, X')^A, (Y, Y')^B\right) \simeq
        \int_{P \in \Tamb_\Set} \Set\left(P^B(Y, Y'), P^A(X, X')\right).
    \end{equation}
\end{theorem}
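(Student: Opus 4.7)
The plan is to derive the claim by chaining the two immediately preceding results. First, apply the Lemma with $\Cat = \Optic$, $S = (X, X')^A$, and $T = (Y, Y')^B$, which yields
\begin{equation*}
\Optic\left((X, X')^A, (Y, Y')^B\right) \simeq \int_{F \in [\Optic\superscript{op}, \Set]} \Set\left(F((Y, Y')^B), F((X, X')^A)\right).
\end{equation*}
This presupposes local smallness of $\Optic$, which holds under the standing assumption that the defining coends land in $\Set$.

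Next, specialise the Tambara encoding Theorem to $\D = \Set$ to obtain an isomorphism of categories $\anon \circ \iota\superscript{op}\colon [\Optic\superscript{op}, \Set] \rightarrow \Tamb_\Set$. From the construction in that proof, its inverse sends a Tambara representation $P$ to the presheaf $\tilde P$ characterised on objects by $\tilde P((X, X')^A) = P^A(X, X')$. Re-indexing the end of the previous display along this isomorphism therefore substitutes $F((Y, Y')^B)$ by $P^B(Y, Y')$ and $F((X, X')^A)$ by $P^A(X, X')$, producing exactly the claimed formula.

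The only point to verify---rather than a real obstacle---is that the re-indexing is legitimate because we have an honest isomorphism of categories, so the two ends define literally the same limit cone in $\Set$. Were the Theorem merely to yield an equivalence, one would have to supplement the argument with the invariance of ends under equivalence of their indexing category, which is standard but non-trivial; thanks to the strict isomorphism, no such step is needed.
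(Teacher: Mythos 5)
Your proposal is correct and matches the paper's own argument: the paper likewise combines \cref{lm:end_formula} (with $\Cat = \Optic$, $S = (X, X')^A$, $T = (Y, Y')^B$) with the isomorphism $[\Optic\superscript{op}, \Set] \cong \Tamb_\Set$ from \cref{thm:tambara_encoding}, specialized to $\D = \Set$. Your added remarks on local smallness and on the re-indexing being along a strict isomorphism rather than a mere equivalence are sensible refinements but do not change the route.
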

\begin{proof}
    By~\cref{thm:tambara_encoding}, $\Tamb_\Set$ is isomorphic to the category of presheaves over $\Optic$. Hence, \cref{eq:tambara_representation} follows from \cref{eq:end_formula}, with $\Cat = \Optic$, $S = (X, X')^A$, and $T = (Y, Y')^B$.
\end{proof}

\section{Discussion}

In this work, we developed a theory of {\em dependent optics} that simultaneously generalizes (mixed) optics and functor lenses. Natural examples of this construction arise from finitely complete (or finitely cocomplete) categories or, more generally, from symmetric monoidal categories with reflexive equalizers (or reflexive coequalizers) preserved by the tensor product. Motivated by the practical applicability of coproducts of dependent lenses~\cite{braithwaite2021fibre,capucci2021translating,fischer}, we showed sufficient conditions under which the category of dependent optics admits finite coproducts. In~\cite{moeller2020monoidal,spivak2019generalized}, it was shown that the category of functor lenses admits a monoidal structure whenever the underlying indexed category is monoidal. Although we did not pursue this direction here, we believe that the key ingredient used in~\cite{moeller2020monoidal}---the notion of {\em pseudomonoid} in the 2-category of indexed categories---can be adapted to our setting to obtain an analogous result for the category of dependent optics.

Aiming to mimic the {\em profunctor encoding}~\cite{clarke2020profunctor,milewski2017} of optics, we defined a generalization of Tambara modules---{\em Tambara representations}. We showed that contravariant functors from $\Optic$ to an arbitrary category $\D$ can equivalently be described as $\D$-valued Tambara representations. Using this result, we established a {\em representative-free} interface for dependent optics, where each dependent optic is encoded as a polymorphic function, and recovered the {\em profunctor representation theorem}~\cite{clarke2020profunctor} in our setting. In the future, it will be interesting to explore in which particular cases of dependent optics this general result can be specialized to yield simplified encodings, akin to the {\em van Laarhoven encoding} for lenses~\cite{riley2018categories}.

A more general definition of optics, {\em fiber optics}, has been developed in~\cite{braithwaite2021fibre}. Furthermore, the authors sketched a possible formalization of the {\em bicategory} of dependent optics to simultaneously generalize dependent lenses, mixed optics, and fiber optics. We believe that novel avenues of research can arise from the interplay between the two works. From the construction in~\cite[Sect.~4.3]{braithwaite2021fibre}, it is straightforward to see that fiber optics are a particular case of dependent optics, as developed here, and they can probably be used to unify many examples of dependent optics. Hopefully, the work done here can help fine-tune the technical details of the definition of the bicategory of dependent optics.

Even though they appear different on the surface, our definition of dependent optics and the notion of {\em compound optics}~\cite{milewski2022compound} are equivalent (see~\cite[Ex.~4.2]{capucci2022seeing}). In our view, this has several beneficial consequences. On the one hand, our manuscript can be used to fill the gaps left in~\cite{milewski2022compound}, such as the study of the properties of the category of dependent optics or the generalization of Tambara modules and of the profunctor representation theorem. On the other hand, the approach taken in~\cite{milewski2022compound} offers a different perspective on dependent optics and their composition in terms of Kan extensions, which can help form an intuitive understanding of our direct, explicit definitions. Finally, the existence of two equivalent, independently-developed definitions supports the intuition that this is indeed a principled adaptation of optics to the dependent case.

\subsubsection*{Acknowledgements}

The author is grateful to Mattia G. Bergomi for helpful comments and feedback, and to Keno Fischer for interesting discussions on the interplay between the optics formalism and reverse-mode automatic differentiation. Thanks too to Bartosz Milewski for valuable feedback on the first version of the manuscript.

\bibliographystyle{eptcs}
\bibliography{bibliography}

\end{document}